\documentclass[preprint,12pt]{elsarticle}

\usepackage[most,breakable]{tcolorbox}
\renewenvironment{boxed}[1]%
  {\expandafter\ifstrequal\expandafter{#1}{orange}{\begin{tcolorbox}[colback=orange!5,colframe=orange!15,breakable,enhanced]}{\begin{tcolorbox}[colback=white,colframe=gray!10,breakable,enhanced]}}%
  {\end{tcolorbox}}

\usepackage{graphicx}
\usepackage{multicol}
\usepackage{amsmath,amssymb,amsfonts, mathtools}
\usepackage{mathrsfs}
\usepackage{amsthm}
\usepackage{rotating}
\usepackage{appendix}
\usepackage[numbers]{natbib}
\usepackage{changepage,afterpage}
\usepackage{etruscan}
\usepackage{tikz, siunitx, newunicodechar}
\usepackage{booktabs,multirow}
\usepackage[utf8]{inputenc}
\usepackage{hyperref}

\usepackage{dsfont}

\newtheorem{theorem}{Theorem}[section]
\newtheorem{lemma}[theorem]{Lemma}
\newtheorem{proposition}[theorem]{Proposition}
\newtheorem{corollary}[theorem]{Corollary}

\newtheorem{example}[theorem]{Example}

\newcommand{\conv}{\operatorname{conv}}
\newcommand{\Tr}{\operatorname{Tr}}
\newcommand{\diag}{\operatorname{diag}}
\newcommand{\Diag}{\operatorname{Diag}}
\newcommand{\s}{\mathds{S}}

\newcommand{\m}{\mathds{M}}

\newcommand{\bigboxplus}{\mathop{\mathchoice{\vcenter{\hbox{\LARGE$\boxplus$}}}
    {\vcenter{\hbox{\large$\boxplus$}}}{}{}}}

\begin{document}
\begin{frontmatter}



\title{Derivatives of symplectic spectral functions} 

\author[1]{Hemant K. Mishra} 
\ead{hemantmishra1124@iitism.ac.in}

\author[1]{Niloy Paul} 
\ead{niloy.paul009@gmail.com}

\author[1]{Temjensangba} 
\ead{temjensangba111@gmail.com}

\affiliation[1]{organization={Department of Mathematics and Computing, Indian Institute of Technology (Indian School of Mines)},
            city={Dhanbad},
            postcode={826004}, 
            state={Jharkhand},
            country={India}}

\begin{abstract}
    A \emph{symplectic spectral function} of a $2n \times 2n$ real positive definite matrix $A$ is a function of its symplectic eigenvalues $0< d_1(A) \leq \cdots \leq d_n(A)$, which is given by a composition $f \circ d$ of some symmetric function $f$ on the set of $n$-vectors with positive entries and the symplectic eigenvalue vector map $d(A)=(d_1(A),\ldots, d_n(A))$.
    In this work, we rigorously study various types of differentiability and Clarke generalized gradient of symplectic spectral functions, and also provide several applications of our findings.
    We show that the symplectic spectral function $f \circ d$ is Fr\'echet differentiable at $A$ if and only if $f$ is Fr\'echet differentiable at $d(A)$, and we compute the derivative expression explicitly. 
    We also show that $f \circ d$ is strictly Fr\'echet differentiable (respectively, continuously G\^ateaux differentiable) at $A$ if and only if $f$ is strictly Fr\'echet differentiable (respectively, continuously G\^ateaux differentiable) at $d(A)$.
    We determine the Clarke generalized gradient of a symplectic spectral function $f \circ d$ at $A$, which is given in terms of the Clarke generalized gradient of $f$ at $d(A)$.
    As an application of our work, we show that the purity, von Neumann entropy, and R\'enyi entropy of a bosonic faithful Gaussian state are Fr\'echet differentiable functions of the covariance matrix of the Gaussian state.
    We also provide explicit expressions of their Fr\'echet derivatives.
\end{abstract}



\begin{keyword}
 Symplectic eigenvalue \sep symplectic spectral function \sep Fr\'echet differentiability \sep G\^ateaux differentiability \sep Clarke derivative \sep purity \sep von Neumann entropy \sep R\'enyi entropy.

\MSC[2020] 
15B48 \sep 15A18 \sep 49J52 \sep 58C20 \sep 81P17

\end{keyword}

\end{frontmatter}

\section{Introduction}
    \label{introduction}
    Associated with a $2n \times 2n$ real positive definite matrix $A$ are unique positive numbers $d_1(A) \leq \cdots \leq d_n(A)$ called the symplectic eigenvalues of $A$ which are invariant under the action of the symplectic group.
    This is known as Williamson's theorem \cite{Williamson} and it plays a fundamental role in classical, quantum, and statistical physics \cite{nicacio}. 
    We call a real-valued map on the set of $2n \times 2n$ real positive definite matrices a \emph{symplectic spectral function} if it depends only on the symplectic eigenvalues of the matrices.
    A symplectic spectral function can be written as a composition $f \circ d$ of some symmetric function $f$ on the set of $n$-vectors with positive entries and the symplectic eigenvalue vector map $d(A)=(d_1(A),\ldots, d_n(A))$ defined on the set of $2n \times 2n$ real positive definite matrices.
    The definition of symplectic spectral functions is analogous to the classical notion of \emph{spectral functions} in matrix theory.
    Our aim is to study various differentiability and weak differentiability properties of symplectic spectral functions.

\subsection{Background and motivation}
    Symplectic spectral functions feature in a variety of applications in physics and engineering, such as quantum mechanics \cite{de2006symplectic} and optics \cite{serafini2023quantum}, Hamiltonian dynamics \cite{Arnold_Givental}, and statistical mechanics \cite{nicacio}.
    In statistical mechanics, the partition function describes statistical properties of a system in thermodynamic equilibrium, and it is a function of the symplectic spectrum of the Hessian of the Hamiltonian \cite[Sec.~V]{nicacio}.
    Symplectic spectral functions are ubiquitous in continuous-variable quantum information theory.
    Several qualitative properties of bosonic Gaussian states are functions of symplectic eigenvalues of their covariance matrices, such as purity, von Neumann entropy, and R\'enyi entropy \cite{serafini2023quantum}. 
    The mutual information of a two-mode Gaussian state is also explicitly determined by the symplectic eigenvalues of the covariance matrix of the Gaussian state \cite{Alessio2024}.
    A measure of entanglement for Gaussian states, known as Gaussian entanglement of formation, corresponds to an optimization problem with the objective function given by a symplectic spectral function of covariance matrices of pure Gaussian states \cite[Sec.~III]{wolf2004gaussian}.
    Symplectic spectral analysis of the Fisher Information Matrix (FIM) of even dimension has been recently studied in statistics, and it is shown that the maximum and minimum symplectic eigenvalues of the FIM can reveal the most and least sensitive directions of a system, respectively \cite{yang2024decision}. 
    Function of symplectic eigenvalues as the objective functions in optimization problems over symplectic Stiefel manifolds has become an area of recent interests \cite{son2021symplectic, son2025brockett, gao2021riemannian}.
    Given that symplectic spectral functions are indispensable in the aforementioned subjects and beyond, a dedicated qualitative analysis of such functions is potentially of great value.
    An analysis of symplectic spectral functions is missing in the literature to the best of our knowledge. 
    We explicitly study various differentiability properties of symplectic spectral functions in this work.
    
    The foundational work of Lewis on various differentiability and subdifferentiability of {spectral functions} of Hermitian matrices \cite{SpectralfunctionLewis} was a motivation for our present study.
    The results obtained in this paper are symplectic analogs of the results of \cite{SpectralfunctionLewis} in the sense that various differentiability properties of a symplectic spectral function $f \circ d$ transfer to the corresponding symmetric function $f$ and vice-versa; and the Clarke generalized gradient of $f \circ d$ is found explicitly in terms of the Clarke generalized gradient of $f$.
    We emphasize that the findings of the present work are not formal corollaries of the inspirational work of Lewis \cite{SpectralfunctionLewis}.
    Non-trivial technical intricacies arise due to non-compactness of the symplectic group, non-orthogonal congruent actions, and repeated symplectic eigenvalues.
    Furthermore, the first order sensitivity analysis of symplectic eigenvalues studied in \cite{mishrafirst} plays a fundamental role in the development of the main results.
    It is worth mentioning that the subsequent work of Lewis \cite{lewis_nonsmooth_eigenvalue}, which delves deeper into non-smooth analysis of eigenvalue functions, subsumes some of the main results of \cite{SpectralfunctionLewis}.
    We believe that an analogous non-smooth analysis of symplectic eigenvalue functions would generalize some of the main results in this paper.
    This would require navigating through the aforementioned technical intricacies, which we leave for future work.

\subsection{Contributions and insights}
    We rigorously study various types of differentiability and Clarke generalized gradient of symplectic spectral functions, and provide several interesting applications of our findings.
    More concretely, consider a symplectic spectral function $f \circ d$ with the corresponding symmetric function $f$ on the set of $n$-vectors with positive entries and the symplectic eigenvalue vector map $d$ on the set of $2n \times 2n$ real positive definite matrices.
    Let $A$ be a $2n \times 2n$ real positive definite matrix.
    \begin{itemize}
        \item We establish necessary and sufficient conditions for several types of differentiability of symplectic spectral functions:
        \begin{itemize}
            \item[\labelitemiii] $f \circ d$ is Fr\'echet differentiable at $A$ if and only if $f$ is Fr\'echet differentiable at $d(A)$,
             \item[\labelitemiii] $f \circ d$ is strictly Fr\'echet differentiable at $A$ if and only if $f$ is strictly Fr\'echet differentiable at $d(A)$,
            \item[\labelitemiii] $f \circ d$ is continuously G\^ateaux differentiable at $A$ if and only if $f$ is continuously G\^ateaux differentiable at $d(A)$.
        \end{itemize}
        \item We explicitly derive the Fr\'echet derivative of $f \circ d$ whenever it is differentiable.
        \item We provide the Clarke generalized gradient of $f \circ d$ at $A$ in terms of the Clarke generalized gradient of $f$ at $d(A)$ whenever $f$ is a locally Lipschitz map near $d(A)$.
        \item As an application of our findings, we deduce that the purity, von Neumann entropy, and R\'enyi entropy of bosonic faithful Gaussian states are Fr\'echet differentiable functions of their covariance matrices, and we also compute their derivatives analytically. 
    \end{itemize}

    The symplectic eigenvalue vector map $d$ is not even G\^ateaux differentiable in general, as exhibited by Example~1 of \cite{mishraderivatives}.
    Interestingly, our findings reveal that stronger differentiability properties of $f$ are transferred to $f \circ d$ and vice-versa.
    Similar conclusion does not stretch to the case of G\^ateaux differentiability as illustrated by the counter Example~\ref{ex:gateaux-der-fails}, where $f$ is given to be G\^ateaux differentiable at $d(A)$ but $f \circ d$ is not G\^ateaux differentiable at $A$.

\subsection{Paper organization}
    Section~\ref{preliminaries} briefly recalls some basic theory of symplectic linear algebra, majorization theory, various types of differentiability, and Clarke generalized directional derivative and gradient.
    The main results for symplectic spectral functions are presented in Section~\ref{sec:symp-spec-analysis}:
    \begin{itemize}
        \item Fr\'echet differentiability is discussed in Section~\ref{sec:symp-spec-func-frechet}. 
        \item Clarke generalized gradient is studied in Section~\ref{sec:symp-spec-func-clarke}. 
        \item Strict Fr\'echet differentiability is analyzed in Section~\ref{sec:symp-spec-func-strict}.
    \end{itemize}
    Section~\ref{sec:applications} presents an application of our findings.
    The differentiability of purity, von Neumann entropy, and R\'enyi entropy of bosonic Gaussian states are discussed.
    
\section{Preliminaries} \label{preliminaries}
    We shall treat every element of the Euclidean space $\mathds{R}^n$ as a column vector, and it is
    equipped with the Euclidean inner product $\left \langle x, y \right \rangle \coloneqq x^\top y$ for all $x, y \in \mathds{R}^n$.
    Let $\|\cdot \|$ denote the Euclidean norm on $\mathds{R}^n$.
    Let $ \mathds{R}_{+}^n$ and $\mathds{R}_{++}^n$ denote the subsets of $\mathds{R}^n$ consisting of the vectors with non-negative and strictly positive entries, respectively. 
    Let $\mathds{M}(n)$ denote the space of $n \times n$ real matrices with the associated inner product given by $ \left \langle A, B \right \rangle \coloneqq \operatorname{Tr}(A^\top B)$ for all $A,B \in \mathds{M}(n)$, which can also be identified with the Euclidean space $\mathds{R}^{n^2}$.
    The norm induced by the aforementioned inner product on $\mathds{M}(n)$ is the \emph{Frobenius norm}: $\|A\| \coloneqq \sqrt{\Tr\left({A^\top A}\right)}$ for all $A \in \mathds{M}(n)$.
    Let $\mathds{S}(n)$ denote the set of symmetric matrices, $\mathds{SS}(n)$ the set of skew-symmetric matrices, $\mathds{O}(n)$ the set of orthogonal matrices, and $\mathds{P}(n)$ the set of symmetric positive definite matrices in $\mathds{M}(n)$.
    For $H \in \mathds{S}(n)$, we denote by $\lambda(H) \in \mathds{R}^n$ the vector consisting of the eigenvalues $\lambda_1(H) \ge \cdots \ge \lambda_n(H)$ of $H$, arranged in the non-increasing order. 
    We shall use the notation $\Pi(n)$ to denote the set of $n \times n$ permutation matrices. 
    We denote the convex hull of a subset $\mathscr{C} \subset \mathds{R}^n$ by $\conv(\mathscr{C})$.
    For $X \in \mathds{M}(n)$, we denote by $\operatorname{diag}(X)$ the vector in $\mathds{R}^n$ whose entries are given by the diagonal entries of $X$.
    Also, for $x \in \mathds{R}^n$, we denote by $\operatorname{Diag}(x)$ the $n \times n$ diagonal matrix whose diagonal entries are given by the entries of $x$. 
    
    We describe some basic structures and set notations that will be useful in working with the symplectic setting.
    Define for $x \in \mathds{R}^n$, $D_{x} \coloneqq \operatorname{Diag}(x) \oplus \operatorname{Diag}(x)$, where $\oplus$ denotes the classical direct sum of matrices.
    For any $2n \times 2n$ real matrix $E=
    \begin{bsmallmatrix}
        A & B \\
        G & H
    \end{bsmallmatrix}$ where $A, B, G, H$ are $n\times n$ blocks, we define $\widehat{E} \coloneqq (A+H)+ i (B-G)$ which is an $n \times n$ complex matrix.
    Here $i$ denotes the imaginary unit.
    It should be noted that if $E$ is symmetric then $\widehat{E}$ is a Hermitian matrix.
    Given $M \in \mathds{M}(2n)$ and $m \in \{1,\ldots, n\}$, let $M_{(m)}$ denote the $2n \times 2m$ submatrix of $M$ obtained by removing the $j$\text{th} columns for all $j \in \{m+1,\ldots,n, n+m+1,\ldots,2n\}$.
    Also, we shall denote by $e^{(m)}$ the $n$-vector whose first $m$ entries are $1$ and the last $n-m$ entries are $0$. 
    We shall use the following analog of direct sum.
    Let $n=n_1+\cdots + n_r$ be a partition of $n$ and 
    $E_j
    =\begin{bsmallmatrix}
        P_j & Q_j \\
        R_j & S_j
    \end{bsmallmatrix} \in \mathds{M}(2n_j)$ and $P_j, Q_j, R_j, S_j$ be its $n_j\times n_j$ blocks for $1 \leq j \leq r$.
    Define the \emph{$s$-direct sum} of $E_1,\ldots, E_r$ as 
    \begin{align*}
        \bigboxplus_{j=1}^r E_j \coloneqq 
            \begin{bmatrix}
                \oplus_{j=1}^r P_j & \oplus_{j=1}^r Q_j \\
                \oplus_{j=1}^r R_j & \oplus_{j=1}^r S_j
            \end{bmatrix} \in \mathds{M}(2n).
    \end{align*}

    Recall the operators $\operatorname{diag} : \mathds{S}(n) \rightarrow \mathds{R}^n$ defined for $H \in \mathds{S}(n)$ as the vector $\diag(H)$ whose entries are the diagonal entries of $H$.
    Also, the operator $\operatorname{Diag} : \mathds{R}^n \rightarrow \mathds{S}(n)$ is defined for $y \in \mathds{R}^n$ by the diagonal matrix $\Diag(y)$ whose diagonal entries are the entries of $y$.
    These operators are the transpose of each other, i.e., we have
    \begin{align*}
        \left \langle H, \Diag(y) \right \rangle = \left \langle \diag(H), y \right \rangle, \qquad \forall y \in \mathds{R}^n, H \in \s(n).
    \end{align*}
    We define analogs of these operators as follows.
    Let $\operatorname{Diag}_s : \mathds{R}^n \rightarrow \mathds{S}(2n)$ be defined as
        \begin{equation*}
           \operatorname{Diag}_s(y) \coloneqq D_y = \operatorname{Diag}(y) \oplus \operatorname{Diag}(y), \qquad y \in \mathds{R}^n.
        \end{equation*}
    Also, define $\operatorname{diag}_s : \mathds{S}(2n) \rightarrow \mathds{R}^n$ as
    \begin{equation*}
       \operatorname{diag}_s(H) 
        \coloneqq \operatorname{diag}(H_{11}) + \operatorname{diag}(H_{22}), \qquad \forall H = \begin{bsmallmatrix}
                      H_{11} & H_{12} \\ 
                      H_{12}^\top & H_{22}  
        \end{bsmallmatrix} \in \s(2n).
    \end{equation*}
    Here each of the blocks $H_{11}, H_{12}, H_{22} $ have size $ n \times n $.
    It is straightforward to see that $ \operatorname{diag}_s $ and $\operatorname{Diag}_s$ are transposes of each other.
    We will use the same notation $^\top$ to denote the transpose of a linear operator also. 
 
\subsection{Symplectic matrix and Williamson's theorem}
    A matrix $M \in \mathds{M}(2n)$ is said to be \emph{symplectic} if it satisfies $M^\top J M = J$, where 
          $J =     
            \begin{bsmallmatrix}
                0 & I_n \\ 
                 -I_n & 0 
            \end{bsmallmatrix},$  
    and $I_n$ denotes the identity matrix of size $n \times n$.
    Let $\mathds{SP}(2n) \subset \m(2n)$ denote the set of symplectic matrices.
    It is well-known that $\mathds{SP}(2n)$ forms a group under matrix multiplication, called the symplectic group, and it is closed under matrix transpose \cite{dms}.
    A symplectic matrix is said to be \emph{orthosymplectic} if it is an orthogonal matrix as well, and we shall denote by $\mathds{OSP}(2n)$ the set of $2n \times 2n$ orthosymplectic matrices.
    There is a one-to-one correspondence between the set of $2n \times 2n$ orthosymplectic matrices and the set of $n \times n$ complex unitary matrices described as follows.
    A $2n \times 2n$ orthosymplectic matrix is precisely of the form 
    $\begin{bsmallmatrix}
        X & Y \\ -Y & X
    \end{bsmallmatrix}$
    for some $X, Y \in \mathds{M}(n)$ such that $X+iY$ is a unitary matrix.
    See Section~IV of \cite{dms}.

    For every $A \in \mathds{P}(2n)$ there exist $M \in \mathds{SP}(2n)$ and an $n \times n$ positive diagonal matrix $D$ such that $M^\top A M= D \oplus D$.
    The diagonal entries of $D$ are unique up to permutation, and they are called the symplectic eigenvalues of $A$.
    This result is due to Williamson~\cite{Williamson} and it is called Williamson's theorem. 
    The theorem thus associates with every $A \in \mathds{P}(2n)$ a unique vector in $\mathds{R}^n_{++}$, denote it by $d(A)$, with entries $d_1(A) \leq \cdots \leq d_n(A)$.
    We denote by $\mathds{SP}^\uparrow(2n, A)$ the set of all symplectic matrices $M$ satisfying $M^\top AM=D_{d(A)}$.
    We emphasize that each $M \in \mathds{SP}^\uparrow(2n, A)$ preserves the non-decreasing ordering of the symplectic eigenvalues after the diagonalization of $A$.

    Suppose that $A$ has $r$ distinct symplectic eigenvalues $\mu_1 < \cdots < \mu_r$, and each $\mu_j$ occurs $n_j$ times so that $n_1+\cdots +n_r = n$.
    It is known \cite[Prop.~3.5]{BabuMishra2024_blockperturbation} (see, also  \cite[Prop.~8.12]{de2006symplectic}) that for any fixed $M \in \mathds{SP}^\uparrow(2n, A)$, we have
    \begin{align}\label{eq:sp-2na-char}
        \mathds{SP}^\uparrow(2n, A) = \left\{ MU: U=\bigboxplus_{j=1}^r U_j, U_j \in \mathds{OSP}(2n_j) \quad  \forall 1 \leq j \leq r \right\}.
    \end{align}
    It is easy to see that $\mathds{SP}^\uparrow(2n, A)$ can be written as the $\mathds{SP}^\uparrow(2n, D_{d(A)})$-orbit of $M$:
     \begin{align}\label{eq:sp-2na-char-diagonal-sense}
        \mathds{SP}^\uparrow(2n, A) = \left\{ MU: U \in \mathds{SP}^\uparrow(2n, D_{d(A)}) \right\}.
    \end{align}

\subsection{Majorization and doubly stochastic matrices}
    Let us denote the entries of a vector $x \in \mathds{R}^n$ by $x_1,\ldots, x_n$, and let  $x^\uparrow_1,\ldots,x^\uparrow_n $ denote the entries of $x$ arranged in the non-decreasing order.
    We shall use the notation $x^\uparrow$ to denote the vector with entries $x^\uparrow_1,...,x^\uparrow_n$. 
    Similarly, the notation $x^\downarrow$ is defined.
     A vector $x$ is said to be \emph{weakly supermajorized} by $y$, in symbols $x \prec^w y$, if 
      \begin{equation} \label{eq:supermajorization}
        \sum_{i=1}^k x_i ^\uparrow \ge \sum_{i=1}^{k} y_i^\uparrow\  \qquad \forall 1 \le k \le n. 
      \end{equation}
    Additionally, if the equality holds in \eqref{eq:supermajorization} for $k = n$ then $x$ is said to be \emph{majorized} by $y$, and it is written as $x \prec y$.

    A matrix $E \in \mathds{R}_{+}^{n^2}$ is said to be \emph{doubly stochastic} if each row and each column of it sums to one. 
    The set of $n \times n$ doubly stochastic matrices is the convex hull of $\Pi(n)$, and we denote this set by $\mathds{DS}(n)$.
    The following statements are known to be equivalent for $x, y \in \mathds{R}^n$ (\cite[Ch.~2, Sec.~B]{marshall}):
    \begin{enumerate}
        \item $x \prec y$;
        \item $x \in \operatorname{conv} \left(\left\{Py : P \in \Pi(n)\right\} \right)$;
        \item $x=E y$ for some $E \in \mathds{DS}(n)$;
        \item $\left \langle w, x \right \rangle \le \left \langle w^\downarrow, y^\downarrow \right \rangle$ for all $w \in \mathds{R}^n$.
    \end{enumerate}
   
\subsection{G\^ateaux and Fr\'echet differentiability} \label{non-smooth analysis1}
    Let $\mathscr{O}$ be an open subset of $\mathds{R}^n$ and $x \in \mathscr{O}$ be arbitrary.
    Consider a real-valued function $f : \mathscr{O} \rightarrow \mathds{R} $.
    The function $f$ is said to be \emph{G\^ateaux differentiable} at $x$ if for all $u \in \mathds{R}^n$, the limit
    \begin{equation*}
        \lim_{t \searrow 0  }  \frac{f(x + t u) - f(x)}{t}
    \end{equation*}
     exists in $\mathds{R}$ and the map
    $u \mapsto \lim_{t \searrow 0  }  \frac{f(x + t u) - f(x)}{t}$ 
    is a linear functional on $\mathds{R}^n$ given by $\nabla f(x) \in \mathds{R}^n$, so that
    \begin{equation*}
        \lim_{t \searrow 0  }  \frac{f(x + t u) - f(x)}{t} = \left \langle \nabla f(x), u \right \rangle, \qquad \forall u \in \mathds{R}^n.
    \end{equation*}
    $\nabla f(x)$ is called the \emph{G\^ateaux derivative} or gradient of $f$ at $x$.
    The function $f$ is said to be continuously G\^ateaux differentiable at $x$ provided that it is G\^ateaux differentiable on a neighborhood $\mathscr{O}_x \subset \mathscr{O}$ of $x$ and the mapping $\mathscr{O}_x \ni y \mapsto \nabla f(y)$ is continuous at $x$. 

    Let $F: \mathscr{O} \to \mathds{R}^m$ be a vector-valued function.
    It is called \emph{Fr\'echet differentiable} at $x$ if there exists a linear map $L: \mathds{R}^n \to \mathds{R}^m$ satisfying
    \begin{equation*}
        \lim_{ \|u\| \to 0} \frac{\|F(x + u) - F(x) - L(u)\|}{\| u\|} = 0.
    \end{equation*}
    The linear map $L$ is unique, whenever it exists, and we call $DF(x)\coloneqq L$ the Fr\'echet derivative of $F$ at $x$.
    In particular when $m=1$, Fr\'echet differentiability of $F$ implies its G\^ateaux differentiability, and $DF(x)$ is a linear functional on $\mathds{R}^n$ given by $DF(x)(\cdot) =\left \langle \nabla F(x), \cdot \right \rangle$. 
    Recall the chain-rule of Fr\'echet derivatives that we shall use frequently in our work.
    Suppose $\mathscr{U} \subset \mathds{R}^m$ be an open set such that $F(x) \in \mathscr{U}$, and let $G: \mathscr{U} \to \mathds{R}^k$ be a vector-valued function.
    If $F$ is Fr\'echet differentiable at $x$ and $G$ is Fr\'echet differentiable at $F(x)$ then $G \circ F$ is Fr\'echet differentiable at $x$.
    Moreover, we have 
    \begin{align*}
        D(G \circ F)(x) = DG(F(x)) \circ DF(x).
    \end{align*}
    The function $F$ is said to be \emph{strictly Fr\'echet differentiable} at $x$ if it is Fr\'echet differentiable at $x$ and
    \begin{equation*}
      \lim_{y \to x, \|u\| \to 0} \frac{\|F(y + u)- F(y)- DF(x)(u)\|}{\|u\|} = 0.  
    \end{equation*}
    In this case, ${D}F(x)$ is called the strict Fr\'echet derivative of $F$ at $x$. 
    If $F$ is a \emph{locally linear map} near a point $x$, i.e., it is a restriction of a linear map $\widetilde{F}$ in a neighborhood around $x$, then it is easy to verify that $F$ is strictly Fr\'echet differentiable at $x$ and we have $DF(x)=\widetilde{F}$.

\subsection{Clarke generalized directional derivative and gradient} \label{non-smooth analysis}    
    Let $\mathscr{O}$ be an open subset of $\mathds{R}^n$ and $x \in \mathscr{O}$ be arbitrary.
    Let $f: \mathscr{O} \to \mathds{R}$ be a locally Lipschitz function near $x$.
    The \emph{Clarke generalized directional derivative} of $f$ at $x$ in the direction of $u \in \mathds{R}^n$ is defined as 
        \begin{equation*}
            f^\circ( x; u) \coloneqq \limsup_{y \to x,\ t \searrow 0 } \frac{f(y + t u) - f(y)}{t},
        \end{equation*}
    and the \emph{Clarke generalized gradient} of $f$ at $x $ is given by
        \begin{equation*}
          \partial f(x) \coloneqq \left\{y \in \mathds{R}^n : \left \langle y, u \right \rangle \le f^\circ (x; u) \text{\ for all}\ u \in \mathds{R}^n  \right\}.
        \end{equation*}
    Recall that the \emph{support function} of a subset $\mathscr{C}$ of $\mathds{R}^n$ is a map 
    $\sigma_{\mathscr{C}} : \mathds{R}^n \rightarrow \mathds{R} \cup \left\{+ \infty \right\}$ defined by 
       \begin{equation*}
           \sigma_\mathscr{C} (x) \coloneqq \operatorname{sup} \left\{\left \langle x, y \right \rangle : y \in \mathscr{C} \right\}.
       \end{equation*}
    For given non-empty closed convex subsets $\mathscr{C}$ and $\mathscr{D}$ of $\mathds{R}^n$, we have $ \mathscr{C} = \mathscr{D} $ if and only if $ \sigma_\mathscr{C} (u) = \sigma_\mathscr{D} (u)$ for all $u \in \mathds{R}^n$.
    It is known that $\partial f(x)$ is a non-empty compact and convex set.
    Moreover, $u \mapsto f^\circ(x ; u)$ is the support function of $ \partial f(x)$.
       
    We refer the reader to \cite[Ch.~2]{Clarke} for a detailed discussion on Clarke generalized derivative and gradient of real-valued functions on Euclidean spaces.

\section{Symplectic spectral function and its analysis}\label{sec:symp-spec-analysis}
    Let us call a subset $\mathscr{P}$ of $\mathds{P}(2n)$ \emph{symplectically invariant} if $M^\top A M \in \mathscr{P}$ whenever $A \in \mathscr{P}$ and $M \in \mathds{SP}(2n)$.
    Suppose $\mathscr{P}$ is a symplectically invariant open subset of $\mathds{P}(2n)$.
    We shall call a map $F: \mathscr{P} \to \mathds{R}$ a \emph{symplectic spectral function} if it is a function of symplectic eigenvalues, i.e., it satisfies $F(M^\top AM)=F(A)$ for all $A \in \mathscr{P}$ and $M \in \mathds{SP}(2n)$.
    Clearly, symplectic spectral functions depend only on the symplectic eigenvalues $d_1(A) \leq \cdots \leq d_n(A)$ of $A \in \mathscr{P}$.
    We can trivially extend a symplectic spectral function defined on $\mathscr{P}$ to the entire set $\mathds{P}(2n)$, by defining it to be the zero function on $\mathds{P}(2n)\backslash \mathscr{P}$, without affecting our local analyses of its various types of differentiability.
    Therefore, without loss of generality, we shall assume that $\mathscr{P}=\mathds{P}(2n)$. 
    
    Associated with a symplectic spectral function $F$ is a real-valued \emph{symmetric} function $f$ defined on $\mathds{R}^{n}_{++}$, where symmetric means $f(Px)=f(x)$ for all $x \in \mathds{R}^{n}_{++}$ and $P \in \Pi(n)$.
    More specifically, the function $f$ is given by $f(x)=F(D_x)$ for all $x \in \mathds{R}^{n}_{++}$, where recall that $D_x = \Diag(x) \oplus \Diag(x)$.
    We have the symplectic eigenvalue vector map $d: \mathds{P}(2n) \to \mathds{R}^n_{++}$ given by $d(A)=(d_1(A),\ldots, d_n(A))$ for all $A \in \mathds{P}(2n)$.
    Therefore, every symplectic spectral function $F$ is precisely of the form $f \circ d$ for some symmetric function $f$ on $\mathds{R}^{n}_{++}$.
    
\subsection{Fr\'echet differentiability} \label{sec:symp-spec-func-frechet}
    
    We ask the following question: When is a symplectic spectral function $f\circ d$ Fr\'echet differentiable?
    The answer to this question is the main result (Theorem~\ref{thm:theorem_gradient}) of this section.
    We begin by developing some preliminary results that will be useful in proving the main result.

    An example of a symplectic spectral function, whose differentiability was essentially studied by one of us~\cite{mishrafirst}, is the sum of $m$ smallest symplectic eigenvalues $\Phi_m: \mathds{P}(2n) \rightarrow \mathds{R}$ given by
   \begin{equation} \label{eq:phi_m function}
       \Phi_m(A) = \sum_{j=1}^m d_j(A), \qquad \forall A \in \mathds{P}(2n).
   \end{equation}
    It is shown in \cite[Cor.~3.5]{mishrafirst} that $-2\Phi_m(\cdot)$ is G\^ateaux differentiable at $A$ if $d_m(A) < d_{m+1}(A)$, with the convention that $d_{n+1}(A) \coloneqq \infty$.
    It turns out that this function is actually Fr\'echet differentiable under the given condition, as stated in the following result, which will also play a key role in the development of this work.

    \begin{boxed}{white}
     \begin{proposition} \label{Thm_sum fun diff}
        Let $A \in \mathds{P}(2n)$ and $m \in \{1,\ldots, n\}$ such that $d_m(A) < d_{m+1}(A)$.
        Then the map $\Phi_m$ defined in \eqref{eq:phi_m function} is Fr\'echet differentiable at $A$ and its gradient is given by
        \begin{align} \label{eq:phi_m-gradient}
            \nabla \Phi_m(A) = \dfrac{1}{2} M_{(m)} M_{(m)}^\top,
        \end{align}
        for any $M \in \mathds{SP}^\uparrow(2n, A)$ independent of its choice.
         In particular, for every $x \in \mathds{R}_{++}^n$ such that $x=x^\uparrow$ and $x_m < x_{m+1}$, the map $\Phi_m$ is Fr\'echet differentiable at $D_x$ and the gradient is given by
        \begin{equation}\label{eq:phi_m-gradient-diagonal}
          \nabla \Phi_m \left( D_x \right) = \frac{1}{2} D_{e^{(m)}}, 
         \end{equation}
         where, by convention, $x_{n+1} \coloneqq \infty$.
    \end{proposition}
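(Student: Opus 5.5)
The strategy is to upgrade the known G\^ateaux differentiability of $\Phi_m$ from \cite[Cor.~3.5]{mishrafirst} to Fr\'echet differentiability. The tool is a variational (Ky Fan type) characterization of $\Phi_m$, which exhibits $\Phi_m$ as an infimum of linear functions of $A$ and hence as a concave function. We use the known identity
\begin{align*}
    \Phi_m(A) = \frac{1}{2}\min\left\{ \Tr\left(S^\top A S\right) : S \in \mathds{M}(2n\times 2m),\ S^\top J S = J_{2m} \right\},
\end{align*}
where $J_{2m}$ denotes the standard symplectic form of size $2m$; this is the symplectic analogue of Ky Fan's principle, due to Hiroshima/Bhatia--Jain. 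Each map $A\mapsto \frac12\Tr(S^\top AS)$ is linear, so $\Phi_m$ is concave on the open convex set $\mathds{P}(2n)$. For concave (or convex) functions on finite-dimensional spaces, G\^ateaux differentiability at a point is equivalent to Fr\'echet differentiability there. This is standard: the difference quotients converge monotonically, and locally Lipschitz plus G\^ateaux differentiability gives Fr\'echet differentiability in finite dimensions via a compactness argument on the unit sphere of directions. Applying this to $-\Phi_m$ and using \cite[Cor.~3.5]{mishrafirst} under the gap condition $d_m(A)<d_{m+1}(A)$ shows that $\Phi_m$ is Fr\'echet differentiable at $A$.

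To identify the gradient, we take any $M\in\mathds{SP}^\uparrow(2n,A)$. Then $S_0=M_{(m)}$ satisfies $S_0^\top J S_0=J_{2m}$ and attains the minimum, since
\begin{align*}
    \frac12\Tr\left(M_{(m)}^\top A M_{(m)}\right)=\frac12\cdot 2\sum_{j\le m} d_j(A).
\end{align*}
Hence $G=\frac12 M_{(m)}M_{(m)}^\top$ is a supergradient of the concave function $\Phi_m$ at $A$: for all $B$,
\begin{align*}
    \Phi_m(B)\le \tfrac12\Tr(S_0^\top B S_0)=\Phi_m(A)+\langle G, B-A\rangle.
\end{align*}
At a point of differentiability the superdifferential is the singleton $\{\nabla\Phi_m(A)\}$, so $\nabla\Phi_m(A)=G$. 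This argument also shows directly that $G$ does not depend on the choice of $M$. One could alternatively check independence via \eqref{eq:sp-2na-char}, since the block-orthosymplectic $U$ leaves $M_{(m)}M_{(m)}^\top$ unchanged when $d_m<d_{m+1}$. The diagonal case then follows by taking $A=D_x$ and $M=I_{2n}$, which lies in $\mathds{SP}^\uparrow(2n,D_x)$ because $x=x^\uparrow$. Then $I_{(m)}I_{(m)}^\top = D_{e^{(m)}}$, which gives \eqref{eq:phi_m-gradient-diagonal}.

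The main obstacle is justifying the variational formula if it is not taken as known. Its inequality direction $\Phi_m(A)\le\frac12\Tr(S^\top AS)$ for all symplectic-type $S$ is the nontrivial part; it is a symplectic Ky Fan theorem (cf.\ Bhatia--Jain). If we wish to avoid it, the fallback is to prove Fr\'echet differentiability directly from the first-order perturbation bounds of \cite{mishrafirst}. In that approach one shows that the directional derivative estimate is uniform over unit directions $H$, using continuity of $d$ and the gap $d_m<d_{m+1}$ to control, uniformly in $H$, the symplectic eigenvector spaces of $A+tH$ near those of $A$. That route is messier, which is why concavity is the preferred route.
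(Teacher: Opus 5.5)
Your proof is correct. It reaches both the Fr\'echet upgrade and the gradient formula by a different mechanism than the paper.

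\textbf{The paper's route.} It takes both the G\^ateaux differentiability and the formula $\frac{1}{2}M_{(m)}M_{(m)}^\top$ directly from \cite[Cor.~3.5]{mishrafirst}. It checks independence of $M$ by hand through \eqref{eq:sp-2na-char}: the gap forces $m=n_1+\cdots+n_{r'}$, so $U_{(m)}U_{(m)}^\top=D_{e^{(m)}}$ for every block orthosymplectic $U$. It then upgrades to Fr\'echet differentiability because $\Phi_m$ is locally Lipschitz (as $d$ is), and locally Lipschitz G\^ateaux differentiable functions on Euclidean space are Fr\'echet differentiable.

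\textbf{Your route.} You get the regularity from concavity via the symplectic Ky Fan minimum principle. Finite concave functions on open convex sets are locally Lipschitz, so your upgrade step rests on the same finite-dimensional fact, only sourced differently. The real gain is the supergradient inequality at the minimizer $S_0=M_{(m)}$. It identifies $\nabla\Phi_m(A)$ and its independence from $M$ in one stroke. Consequently you need only the existence of the G\^ateaux derivative from \cite{mishrafirst}, not its formula. You also never use the description \eqref{eq:sp-2na-char} of $\mathds{SP}^\uparrow(2n,A)$. As a bonus you get the global bound $\Phi_m(B)\le\Phi_m(A)+\langle\nabla\Phi_m(A),B-A\rangle$.

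\textbf{What the paper's route buys.} It avoids the variational principle and uses only tools it needs elsewhere anyway. These are the Lipschitz continuity of $d$ (Lemma~\ref{th:gradient_diagonal} and Section~\ref{sec:symp-spec-func-clarke}) and the characterization \eqref{eq:sp-2na-char} (Theorem~\ref{thm:theorem_gradient} and Proposition~\ref{th:second set}).

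Your fallback via uniform perturbation bounds is not needed.
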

    \end{boxed}
    \begin{proof}
        We know by Corollary~3.5 of \cite{mishrafirst} that $-2\Phi_m$ whence $\Phi_m$ is G\^ateaux differentiable at $A$, its gradient is given by \eqref{eq:phi_m-gradient},
        and the independence of \eqref{eq:phi_m-gradient} from the choice of $M$ follows simply from the uniqueness of the gradient expression.
        To show it explicitly, let $M \in \mathds{SP}^\uparrow(2n, A)$ be fixed.
        We know from \eqref{eq:sp-2na-char} that an arbitrary $N \in \mathds{SP}^\uparrow(2n, A)$ is of the form $N=MU$ where $U=\bigboxplus_{j=1}^r U_j$ and $U_j \in \mathds{OSP}(2n_j)$ for all $1 \leq j \leq r$.
        Here $A$ has $r$ distinct symplectic eigenvalues $\mu_1< \cdots < \mu_r$ and each $\mu_j$ has multiplicity $n_j$ for $1 \leq j \leq r$.
        This then gives
        \begin{align}\label{eq:nm-m-um}
             N_{(m)} N_{(m)}^\top
                &=  M U_{(m)} U_{(m)}^\top M^\top,
        \end{align}
        where recall that $U_{(m)}$ is the $2n \times 2m$ submatrix of $U$ obtained by removing all the columns with indices in $\{m+1,\ldots, n, n+m+1,\ldots, 2n\}$.
        Suppose $U_j=
        \begin{bsmallmatrix}
            \phantom{-}X_j & Y_j \\
            -Y_j & X_j
        \end{bsmallmatrix}$ such that $X_j, Y_j \in \mathds{M}(n_j)$ and $X_j+iY_j$ is a unitary matrix for $1 \leq j \leq r$.
        The condition $d_{m}(A) < d_{m+1}(A)$ then implies that for some $1 \leq r' \leq r$, we have $m=n_1+\cdots + n_{r'}$ so that 
        \begin{align*}
            U_{(m)} = 
            \begin{bmatrix}
                \phantom{-} \mathop{\bigoplus}\limits_{j=1}^{r'} X_j & & \mathop{\bigoplus}\limits_{j=1}^{r'} Y_j \\\\
                0 && 0 \\\\
                - \mathop{\bigoplus}\limits_{j=1}^{r'} Y_j && \mathop{\bigoplus}\limits_{j=1}^{r'} X_j \\\\
                0 && 0 
            \end{bmatrix},
        \end{align*}
        where $0$ denotes the $2(n-m) \times 2m$ zero matrix.
        It is then easy to see that
        \begin{align*}
            U_{(m)} U_{(m)}^\top &= D_{e^{(m)}},
        \end{align*}
        where recall that $e^{(m)}$ is the $n$-vector with first $m$ entries equal to $1$ and the remaining $n-m$ entries $0$.
        By substituting this in \eqref{eq:nm-m-um} then gives the desired identity $N_{(m)} N_{(m)}^\top=M_{(m)} M_{(m)}^\top$.
        
        We know that the map $\Phi_m$ is locally Lipschitz (see, e.g., \cite[Th.~7]{Onsymplecticeigenvalue}).
        It is known that G\^ateaux differentiable locally Lipschitz maps are Fr\'echet differentiable \cite[p.~132]{Borwein}.
        This implies that $\Phi_m$ is Fr\'echet differentiable at $A$.
        The gradient expression \eqref{eq:phi_m-gradient-diagonal} is a particular case of \eqref{eq:phi_m-gradient} by choosing $A=D_x$ and $M$ to be the identity matrix.
    \end{proof}  

   For given $x, y \in \mathds{R}^n$, we say that $x$ \emph{block-refines} $y$ if $x_i = x_j$ implies $y_i = y_j$. 
   \begin{boxed}{white}
         \begin{lemma} \label{lemma1}
            Let $y \in \mathds{R}^n$ and $A \in \mathds{P}(2n)$ such that $d(A)$ block-refines $y$.
             Then the function $\left \langle y, d(\cdot) \right \rangle$ is Fr\'echet differentiable at 
             $A$ and its gradient is given by
                 \begin{equation}\label{eq:grad-y-d}
                     \nabla \left \langle y, d(A) \right \rangle = \frac{1}{2} M D_y M^\top,
                 \end{equation}
            where $M \in \mathds{SP}^\uparrow(2n, A)$, and \eqref{eq:grad-y-d} is independent of the choice of $M$.
            In particular, 
            \begin{equation}\label{eq:grad-y-d(A)}
                     \nabla \left \langle y, d(D_{d(A)}) \right \rangle = \frac{1}{2} D_y.
            \end{equation}
         \end{lemma}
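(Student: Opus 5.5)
We write $\langle y, d(\cdot)\rangle$ as a linear combination of the partial-sum functions $\Phi_m$ and then apply Proposition~\ref{Thm_sum fun diff} to each term. The main point is that the block-refinement hypothesis lets us use only those indices $m$ at which $d_m(A) < d_{m+1}(A)$.

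Let $d(A)$ have distinct values $\mu_1<\cdots<\mu_r$ with multiplicities $n_1,\ldots,n_r$. Put $m_k = n_1+\cdots+n_k$ for $0\le k\le r$, so $m_0=0$ and $m_r=n$. Since $d(A)$ is sorted, the indices $i$ with $d_i(A)=\mu_k$ form the consecutive block $I_k=\{m_{k-1}+1,\ldots,m_k\}$. Because $d(A)$ block-refines $y$, the vector $y$ is constant on each block $I_k$; call the common value $c_k$. Summation by parts, with $\Phi_{m_0}\equiv 0$, then gives, for every $B\in\mathds{P}(2n)$,
\begin{align*}
\langle y, d(B)\rangle=\sum_{k=1}^r c_k\big(\Phi_{m_k}(B)-\Phi_{m_{k-1}}(B)\big)=\sum_{k=1}^{r} (c_k-c_{k+1})\,\Phi_{m_k}(B),
\end{align*}
with the convention $c_{r+1}\coloneqq 0$. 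This is an identity of functions on all of $\mathds{P}(2n)$, because the block indices are fixed by $A$ and do not depend on $B$.

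Each $m_k$ satisfies $d_{m_k}(A)<d_{m_k+1}(A)$, using the convention $d_{n+1}(A)=\infty$ for $k=r$. Proposition~\ref{Thm_sum fun diff} therefore shows that every $\Phi_{m_k}$ is Fr\'echet differentiable at $A$, with gradient $\tfrac12 M_{(m_k)}M_{(m_k)}^\top$ for any $M\in\mathds{SP}^\uparrow(2n,A)$, and this gradient does not depend on the choice of $M$. A finite linear combination of Fr\'echet differentiable functions is Fr\'echet differentiable, so $\langle y,d(\cdot)\rangle$ is Fr\'echet differentiable at $A$ and
\begin{align*}
\nabla\langle y,d(A)\rangle=\tfrac12\sum_{k=1}^r (c_k-c_{k+1})\,M_{(m_k)}M_{(m_k)}^\top .
\end{align*}
This expression is automatically independent of $M$, because each summand is.

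It remains to identify the sum with $\tfrac12 MD_yM^\top$. Deleting columns of $M$ gives $M_{(m)}M_{(m)}^\top = M D_{e^{(m)}} M^\top$, since $D_{e^{(m)}}$ is exactly the diagonal projection onto the retained columns. Hence the sum equals $\tfrac12 M D_z M^\top$ with
\begin{align*}
z=\sum_{k=1}^r (c_k-c_{k+1})e^{(m_k)}.
\end{align*}
For $i\in I_k$, the $i$th entry of $z$ is $\sum_{j\ge k}(c_j-c_{j+1})=c_k=y_i$, by telescoping. So $z=y$, which proves \eqref{eq:grad-y-d}.

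For the particular case \eqref{eq:grad-y-d(A)}, note that $D_{d(A)}$ has symplectic spectrum $d(A)$, and the identity matrix lies in $\mathds{SP}^\uparrow(2n,D_{d(A)})$; taking $M=I$ gives the formula. The only delicate point in the argument is bookkeeping: the summation by parts must use only the indices $m_k$ at which there is a gap in the symplectic spectrum, and this is exactly what the block-refinement hypothesis guarantees.
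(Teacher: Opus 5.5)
Your proof is correct and follows the paper's route. You write $\langle y, d(\cdot)\rangle$ as a combination of the $\Phi_m$ at the gap indices, apply Proposition~\ref{Thm_sum fun diff} (including for independence of $M$), and reassemble $\tfrac12 M D_y M^\top$ via $M_{(m)}M_{(m)}^\top = M D_{e^{(m)}} M^\top$. Your summation-by-parts step is only a cosmetic rearrangement of the paper's $\sum_\ell y_{k_\ell}\bigl(\Phi_{k_\ell}-\Phi_{k_{\ell-1}}\bigr)$.
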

    \end{boxed}
    \begin{proof}
        Suppose $d(A)$ has $r$ distinct entries.
        This implies that there exist integers $0 \eqqcolon k_0 < k_1 < \cdots < k_r \coloneqq n$ such that
        \begin{align*} 
            d_j(A) = d_{k_\ell}(A) \qquad  \forall k_{\ell-1} < j \leq k_\ell, \quad 1\leq \ell \leq r.
        \end{align*}
         Since $d(A)$ block-refines $y$, we also have
         \begin{align*}
             y_j = y_{k_\ell} \qquad \forall k_{\ell-1} < j \leq k_\ell, \quad 1 \leq \ell \leq r.
         \end{align*}
        Recall the function $\Phi_m$ defined in \eqref{eq:phi_m function} for $1 \leq m \leq n$, and set $\Phi_0 \coloneqq 0$. 
        We have 
        \begin{align}
            \left \langle y, d(\cdot) \right \rangle 
                &=   \sum_{j=1}^{n} y_j d_j(\cdot) \nonumber \\
                &= \sum_{\ell=1}^r \sum_{j=k_{\ell-1}+1}^{k_\ell} y_j d_j(\cdot) \nonumber \\
                &=  \sum_{\ell=1}^r y_{k_\ell} \sum_{j=k_{\ell-1}+1}^{k_\ell}  d_j(\cdot) \nonumber \\
                &= \sum_{\ell=1}^r y_{k_\ell}  \left(\Phi_{k_{\ell}}(\cdot) - \Phi_{k_{\ell-1}}(\cdot) \right) \label{eq:diff-difference}.
        \end{align}
        We know by Proposition~\ref{Thm_sum fun diff} that each term in \eqref{eq:diff-difference} is Fr\'echet differentiable at $A$.
        Therefore, $\left \langle y, d(\cdot) \right \rangle$ is Fr\'echet differentiable at $A$.
        Its gradient is obtained as follows.
        Recall that for $k \in \{1,\ldots, n\}$, $e^{(k)}$ is the $n$-vector with the first $k$ entries $1$ and the last $n-k$ entries $0$.
        Choose any $M \in \mathds{SP}^\uparrow(2n, A)$,
        and recall that $M_{(k)}$ denotes the $2n \times 2 k$ submatrix of $M$ obtained by removing the $j$\text{th} columns for all $j \in \{k+1,\ldots,n, n+k+1,\ldots,2n\}$.
        By \eqref{eq:phi_m-gradient}, we thus have
        \begin{align*}
                   \nabla \left \langle y, d(A) \right \rangle   
                        &=  \sum_{\ell=1}^r y_{k_\ell}  \left(\nabla \Phi_{k_{\ell}}(A) - \nabla \Phi_{k_{\ell-1}}(A) \right)  \\
                        &=  \frac{1}{2}\sum_{\ell=1}^r y_{k_\ell}  \left(M_{(k_\ell)} M_{(k_\ell)}^\top  - M_{(k_{\ell-1})} M_{(k_{\ell-1})}^\top \right)  \\
                        &= \frac{1}{2}\sum_{\ell=1}^r y_{k_\ell}  \left(M D_{e^{(k_\ell)}} M^\top  - M D_{e^{(k_{\ell-1})}} M^\top \right)  \\
                        &= \frac{1}{2} M \left(\sum_{\ell=1}^r y_{k_\ell}  \left( D_{e^{(k_{\ell})}}   - D_{e^{(k_{\ell-1})}} \right) \right) M^\top  \\
                        &= \frac{1}{2} M D_y M^\top.
        \end{align*} 
        The independence of \eqref{eq:grad-y-d} from the choice of $M$ follows from that of Proposition~\ref{Thm_sum fun diff}.
        
        We obtain the expression \eqref{eq:grad-y-d(A)} by choosing $M$ equal to the $2n \times 2n$ identity matrix in \eqref{eq:grad-y-d}.
    \end{proof}
    \begin{boxed}{white}
          \begin{lemma} \label{lemma2}
            Let $f: \mathds{R}^n_{++} \to \mathds{R}$ be a symmetric function, and let $x \in \mathds{R}^n_{++}$ such that $x=x^\uparrow$. 
            If $f$ is Fr\'echet differentiable at $x$, then $x$ block-refines $\nabla f(x)$.
            Furthermore, the function $\left \langle \nabla f(x), d(\cdot) \right \rangle$ is Fr\'echet differentiable at $D_x$ and its gradient is given by
                \begin{equation}\label{eq:gradient-symp-symmetric-function}
                    \nabla  \left \langle \nabla f \left(x \right), d  \left( D_x \right) \right \rangle = \frac{1}{2} D_{\nabla f(x)}.
                \end{equation}
          \end{lemma}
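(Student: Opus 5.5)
The proof splits into two parts: a symmetry argument showing that $x$ block-refines $\nabla f(x)$, followed by a direct application of Lemma~\ref{lemma1}.

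\emph{Block-refinement.} Suppose $x_i = x_j$ for some $i \neq j$, and let $P \in \Pi(n)$ be the transposition swapping coordinates $i$ and $j$, so that $Px = x$. Symmetry gives $f(Pz) = f(z)$ for all $z \in \mathds{R}^n_{++}$. For small $u \in \mathds{R}^n$ we then have
\begin{align*}
    f(x + Pu) = f(P(x+u)) = f(x+u).
\end{align*}
Expanding both sides via the Fr\'echet (or just G\^ateaux) derivative at $x$ yields
\begin{align*}
    \left\langle \nabla f(x), Pu \right\rangle = \left\langle \nabla f(x), u \right\rangle \qquad \forall u \in \mathds{R}^n,
\end{align*}
which means $P^\top \nabla f(x) = \nabla f(x)$. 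Since $P^\top = P$ is the transposition, this says $(\nabla f(x))_i = (\nabla f(x))_j$. Hence $x$ block-refines $\nabla f(x)$.

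\emph{Differentiability and the gradient.} Because $x = x^\uparrow$ has positive entries, $d(D_x) = x$: the identity matrix diagonalizes $D_x$ in Williamson form with the entries already in non-decreasing order, so $I_{2n} \in \mathds{SP}^\uparrow(2n, D_x)$. By the first part, $d(D_x) = x$ block-refines $y \coloneqq \nabla f(x)$. Lemma~\ref{lemma1}, applied with $A = D_x$ and this $y$, then shows that $\langle \nabla f(x), d(\cdot)\rangle$ is Fr\'echet differentiable at $D_x$. Its gradient is $\frac{1}{2} M D_y M^\top$ for any $M \in \mathds{SP}^\uparrow(2n, D_x)$; taking $M = I_{2n}$ (equivalently, using \eqref{eq:grad-y-d(A)}) gives \eqref{eq:gradient-symp-symmetric-function}.

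No step is a genuine obstacle. The only points needing care are checking $d(D_x) = x$, which uses the ordering hypothesis $x = x^\uparrow$, and ensuring that the permutation argument uses only the symmetry of $f$ together with differentiability at the single point $x$.
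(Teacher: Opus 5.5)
Your proposal is correct and follows essentially the same route as the paper: the transposition-symmetry argument (you compare the expansions of $f(x+Pu)=f(x+u)$ directly, where the paper applies the chain rule to $f=f\circ P$ at $x$), followed by Lemma~\ref{lemma1} at $A=D_x$ with $M=I_{2n}$. Your explicit check that $d(D_x)=x$ fills in a detail the paper leaves implicit.
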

    \end{boxed}
    \begin{proof}
        The fact that $x$ block-refines $\nabla f(x)$ essentially follows from the fact that $f$ is symmetric.
        Indeed, suppose $ x_i = x_j $ for some distinct indices $i,j$.
        Let $P \in \Pi(n)$ be the permutation matrix that interchanges the $i$th and $j$th coordinates so that $Px=x$.
        We have $f = f \circ P$ and the map $P$ is clearly Fr\'echet differentiable at $x$ with Fr\'echet derivative $P$.
        By using the chain-rule, we thus have
        \begin{align*}
        \left \langle \nabla f(x), \cdot \right \rangle 
                             &= D(f \circ P)(x)(\cdot) \\
                             &= (Df(x) \circ P )(\cdot) \\
                             &= \langle \nabla f(x), P(\cdot) \rangle \\
                             &= \left\langle P^T \nabla f(x), \cdot \right\rangle,
        \end{align*}
        implying that $P \nabla f(x) = \nabla f(x)$.
        The last statement is a direct consequence of Lemma~\ref{lemma1}.
    \end{proof}

    The next lemma is a significant step towards answering the question of Fr\'echet differentiability of symplectic spectral functions.
    \begin{boxed}{white}
       \begin{lemma} \label{th:gradient_diagonal}
           Let $f: \mathds{R}^n_{++} \to \mathds{R}$ be a symmetric function, and let $x \in \mathds{R}^n_{++}$ such that $x=x^\uparrow$. 
            If $f$ is Fr\'echet differentiable at $x$, then the symplectic spectral function $f \circ d$ is Fr\'echet differentiable at $D_x$ with the gradient 
            \begin{equation} \label{eq:fd-gradient}
                \nabla (f \circ d)(D_x) = \frac{1}{2}  D_{\nabla f(x)} .
            \end{equation}
       \end{lemma}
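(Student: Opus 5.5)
Write $H\in\s(2n)$ for a small symmetric perturbation, $x=x^\uparrow=d(D_x)$, and $g\coloneqq\nabla f(x)$. We must show
\begin{align*}
    f(d(D_x+H)) - f(x) - \tfrac12\left\langle D_{g}, H\right\rangle = o(\|H\|).
\end{align*}
The plan is to split the left-hand side into two pieces:
\begin{align*}
    \Big[f(d(D_x+H)) - f(x) - \langle g, d(D_x+H)-x\rangle\Big] + \Big[\langle g, d(D_x+H)\rangle - \langle g, d(D_x)\rangle - \tfrac12\langle D_g, H\rangle\Big].
\end{align*}
By Lemma~\ref{lemma2}, $x$ block-refines $g$, so $\langle g, d(\cdot)\rangle$ is Fr\'echet differentiable at $D_x$ with gradient $\frac12 D_g$. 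Hence the second bracket is $o(\|H\|)$.

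For the first bracket, I use the Fr\'echet differentiability of $f$ at $x$. Put $u\coloneqq d(D_x+H)-x$. Then $f(x+u)-f(x)-\langle g,u\rangle = \|u\|\,\varepsilon(u)$, where $\varepsilon(u)\to 0$ as $u\to 0$. Since $d$ is locally Lipschitz near $D_x$, there is $L$ with $\|u\|\le L\|H\|$ for small $H$. (This follows from the Lipschitz property of each $\Phi_m$ cited in the proof of Proposition~\ref{Thm_sum fun diff}, since $d_m=\Phi_m-\Phi_{m-1}$, or from \cite[Th.~7]{Onsymplecticeigenvalue} directly.) Therefore $u\to 0$ as $H\to0$, and the first bracket is bounded by $L\|H\|\,\varepsilon(u)=o(\|H\|)$. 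Adding the two estimates gives Fr\'echet differentiability of $f\circ d$ at $D_x$ with gradient $\frac12 D_{\nabla f(x)}$, which is \eqref{eq:fd-gradient}.

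The only delicate point is the Lipschitz bound $\|d(D_x+H)-d(D_x)\|=O(\|H\|)$. Without it, the remainder of $f$ cannot be converted into $o(\|H\|)$. I will rely on the cited Lipschitz continuity of symplectic eigenvalues. If needed, this can be justified by the Weyl-type perturbation bound for symplectic eigenvalues, $|d_j(A+H)-d_j(A)|\le C\|H\|$ locally, which is the content of \cite[Th.~7]{Onsymplecticeigenvalue}.

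Symmetry of $f$ is used only through Lemma~\ref{lemma2}, to ensure that $\langle g, d(\cdot)\rangle$ is differentiable despite repeated entries of $x$. This is the essential role of the block-refinement condition.
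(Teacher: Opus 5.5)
Your proposal is correct and follows essentially the same route as the paper. The paper uses the same triangle-inequality split, handles the second term by Lemma~\ref{lemma2}, and handles the first by Fr\'echet differentiability of $f$ at $x$ together with the local Lipschitz bound on $d$ from \cite[Th.~7]{Onsymplecticeigenvalue}; it differs only in writing this with explicit $\varepsilon$--$\delta$ bookkeeping rather than $o(\|H\|)$ notation.
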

    \end{boxed}
    \begin{proof}
     Let $\varepsilon>0$ be given.
     Since the symplectic eigenvalue vector map $d$ is locally Lipschitz \cite[Th.~7]{Onsymplecticeigenvalue}, there exists $\delta_1 > 0$ and $L > 0$ such that for all $H \in \mathds{S}(2n)$ satisfying $\|H\| < \delta_1$, we have $D_x + H \in \mathds{P}(2n)$ and
      \begin{equation} \label{eq:d-loc-lips-inequality}
         \left\| d(D_{x} + H)-d(D_{x}) \right\| \le L \| H \|.
      \end{equation}
     Since $f$ is Fr\'echet differentiable at $x$, there exists $\delta_2 > 0$ such that for all $y \in \mathds{R}^n$ satisfying $\|x-y\| < \delta_2$, we have
          \begin{equation}\label{eq:f-diff-cond}
              |f(y)-f(x)- \left \langle \nabla f(x), y-x \right \rangle | \le \frac{\varepsilon}{2L} \|x-y\|.       
           \end{equation}
    Also, recall from Lemma~\ref{lemma2} that the map $\left \langle \nabla f(x), d(\cdot) \right \rangle$ is
    Fr\'echet differentiable at $D_x$ with the gradient given by $\frac{1}{2} D_{\nabla f(x)}$.
    This means that there exists $\delta_3 > 0$ such that for all $H \in \mathds{S}(2n)$ satisfying $\|H\| < \delta_3$, we have $D_x + H \in \mathds{P}(2n)$ and
     \begin{equation} \label{eq:gradient_diagonal_eq2}
         \left|\left \langle \nabla f(x), d(D_x+H)-d(D_x) \right \rangle - \frac{1}{2} \operatorname{Tr} \left( D_{\nabla f(x)} H  \right) \right| \le \frac{\varepsilon}{2} \| H \|.
     \end{equation}
     Choose $\delta \coloneqq \min\{\delta_1, \frac{\delta_2}{L}, \delta_2, \delta_3\}$.
     Let $H \in \mathds{S}(2n)$ such that $\|H\| < \delta$.
     We have $D_x+H \in \mathds{P}(2n)$ and by triangle inequality, we get
     \begin{align}
        &\left| f(d(D_x+H))-f(d(D_x))-\frac{1}{2} \operatorname{Tr} \left( D_{\nabla f(x)} H \right) \right| \nonumber \\
        &\hspace{0.5cm} \leq  \left| f(d(D_x+H))-f(d(D_x))- \left \langle \nabla f(x), d(D_x+H)-d(D_x) \right \rangle \right| \nonumber \\
        &\hspace{1cm} + \left|\left \langle \nabla f(d(D_x)), d(D_x+H)-d(D_x) \right \rangle -\frac{1}{2} \operatorname{Tr} \left(D_{\nabla f(x)} H \right) \right| \label{eq:fd-Dx-diff-1}.
     \end{align}
     Since $\delta \leq \min\{\delta_1, \frac{\delta_2}{L}\}$, we get from \eqref{eq:d-loc-lips-inequality} that $\left\| d(D_x+H)-d(D_x)  \right\| < \delta_2$.
     By employing the relations \eqref{eq:f-diff-cond} and \eqref{eq:gradient_diagonal_eq2}, we get from \eqref{eq:fd-Dx-diff-1} that
    \begin{align}
        &\left| f(d(D_x+H))-f(d(D_x))-\frac{1}{2} \operatorname{Tr} \left( D_{\nabla f(x)} H \right) \right| \nonumber \\
        &\hspace{5cm}\leq \frac{\varepsilon}{2L} \left\|d(D_x+H)-d(D_x)  \right\| + \frac{\varepsilon}{2}\|H\| \nonumber \\
        &\hspace{5cm} \leq \frac{\varepsilon}{2L} L \left\|H  \right\| + \frac{\varepsilon}{2}\|H\| \label{eq:fd-Dx-diff-3}  \\
        &\hspace{5cm} = \varepsilon \|H\| \nonumber,
     \end{align}
     where \eqref{eq:fd-Dx-diff-3} uses \eqref{eq:d-loc-lips-inequality}.
     We have thus established that $f \circ d$ is Fr\'echet differentiable at $D_x$ and its gradient is given by \eqref{eq:fd-gradient}.
    \end{proof}
    \begin{boxed}{white}
       \begin{corollary}\label{cor:frechet-gradient-without-ordering}
           Lemma~\ref{th:gradient_diagonal} holds without the assumption $x=x^\uparrow$.
       \end{corollary}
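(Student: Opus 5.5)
Let $x \in \mathds{R}^n_{++}$ be arbitrary with $f$ Fr\'echet differentiable at $x$. The plan is to reduce to the ordered case via a permutation realized by an orthosymplectic matrix. Choose $P \in \Pi(n)$ with $Px = x^\uparrow$. Because $f$ is symmetric, $f = f\circ P^\top$, so by the chain rule $f$ is Fr\'echet differentiable at $x^\uparrow = Px$. Its gradient there is $\nabla f(x^\uparrow) = P\nabla f(x)$, as in the argument of Lemma~\ref{lemma2}. Lemma~\ref{th:gradient_diagonal} then gives that $f\circ d$ is Fr\'echet differentiable at $D_{x^\uparrow}$ with gradient $\frac12 D_{P\nabla f(x)}$.

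Next, set $Q \coloneqq P\oplus P \in \m(2n)$. It is orthogonal, and it has the form $\begin{bsmallmatrix} X & Y\\ -Y & X\end{bsmallmatrix}$ with $X=P$, $Y=0$, where $X+iY=P$ is unitary. Hence $Q\in\mathds{OSP}(2n)$. A direct computation gives $Q D_x Q^\top = D_{Px} = D_{x^\uparrow}$. More generally, $Q D_z Q^\top = D_{Pz}$ for every $z \in \mathds{R}^n$.

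Since $f\circ d$ is a symplectic spectral function, $(f\circ d)(D_x + H) = (f\circ d)(Q(D_x+H)Q^\top) = (f\circ d)(D_{x^\uparrow} + QHQ^\top)$ for all small $H \in \s(2n)$. The map $H\mapsto QHQ^\top$ is linear and preserves the Frobenius norm. Composing with it therefore preserves Fr\'echet differentiability, and the gradient transforms by the adjoint:
\begin{align*}
\nabla(f\circ d)(D_x) = Q^\top\, \nabla(f\circ d)(D_{x^\uparrow})\, Q = \tfrac12 Q^\top D_{P\nabla f(x)} Q = \tfrac12 D_{P^\top P \nabla f(x)} = \tfrac12 D_{\nabla f(x)}.
\end{align*}
This recovers \eqref{eq:fd-gradient} at $D_x$.

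The main point to verify carefully is the bookkeeping of the gradient under the permutation. We need $\nabla f(Px)=P\nabla f(x)$, and not $P^\top\nabla f(x)$. This follows from differentiating $f(y) = f(P^\top y)$ at $y=Px$, which gives $\nabla f(Px) = P\nabla f(x)$. We also need the identity $Q^\top D_z Q = D_{P^\top z}$. Both are routine, so no real obstacle is expected.
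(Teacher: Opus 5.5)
Your proposal is correct and follows essentially the same route as the paper. You first transfer differentiability of $f$ from $x$ to $x^\uparrow$ via $f = f\circ P^\top$, which gives $\nabla f(x^\uparrow)=P\nabla f(x)$, and then pull back the gradient at $D_{x^\uparrow}$ through the congruence $B\mapsto (P\oplus P)B(P^\top\oplus P^\top)$ by the chain rule. Your explicit check that $P\oplus P$ is orthosymplectic, so that this congruence leaves $f\circ d$ invariant, is a detail the paper leaves implicit.
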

    \end{boxed}
    \begin{proof}
    Let $x \in \mathds{R}^n_{++}$ be arbitrary, and assume that $f$ is Fr\'echet differentiable at $x$. 
    We start by showing that $f$ is Fr\'echet differentiable at $x^\uparrow$.
    Let $P \in \Pi(n)$ such that $ P x = x^\uparrow$. 
    Since $f$ is a symmetric function, we have $f=f \circ P^{\top}$.
    By assumption, $f$ is Fr\'echet differentiable at $P^\top (x^\uparrow )=x$, and we know that the permutation map $P^\top$ is Fr\'echet differentiable at $x^\uparrow$ with Fr\'echet derivative $P^\top$.
    This implies that $f$ is Fr\'echet differentiable at $x^\uparrow$.
    By the chain-rule, we also have
    \begin{align*}
        \left \langle \nabla f(x^\uparrow), \cdot \right \rangle 
            &= D\left(f \circ P^\top \right)\left(x^\uparrow \right)(\cdot) \\
            &= \left(Df(x) \circ P^\top \right)(\cdot) \\
            &= \left \langle \nabla f(x), P^\top (\cdot)\right \rangle \\
            &= \left \langle P \nabla f(x), \cdot \right \rangle,
    \end{align*}
    implying that 
    \begin{align} \label{eq:nabla-f-unordered}
        \nabla f(x) = P^\top \nabla f\left(x^\uparrow \right).
    \end{align}

    We now show that the symplectic spectral function $f \circ d$ is Fr\'echet differentiable at $D_x$.
    Define a map $\varrho: \mathds{P}(2n) \to \mathds{P}(2n)$ by $\varrho(B)=(P \oplus P) B (P^\top \oplus P^\top)$.
    The map $\varrho$ satisfies $\varrho(D_{x})=D_{x^\uparrow}$, it is Fr\'echet differentiable on $\mathds{P}(2n)$, and its derivative at every
    $B \in \mathds{P}(2n)$ is the linear map $D\varrho(B)=(P \oplus P) (\cdot) (P^\top \oplus P^\top )$ on $\mathds{S}(2n)$.
    We have $f\circ d = (f \circ d) \circ \varrho$.
    Since $f$ is Fr\'echet differentiable at $x^\uparrow$, Lemma~\ref{th:gradient_diagonal} implies that $f \circ d$ is Fr\'echet differentiable at $D_{x^\uparrow} $.
    Using the fact that $\varrho$ is Fr\'echet differentiable $D_x$, and we have the composition $f \circ d = (f \circ d) \circ \varrho$, the chain-rule implies that $f \circ d$ is Fr\'echet differentiable at $D_x$.
    Moreover, by using the gradient expression \eqref{eq:fd-gradient}, we get
    \begin{align*}
        \left \langle \nabla (f\circ d)(D_x ), \cdot \right \rangle 
            &= D((f \circ d)\circ \varrho)(D_x)(\cdot) \\
            &= D(f \circ d)\left(D_{x^\uparrow} \right)(D\varrho(D_x) (\cdot )) \\
            &= \left \langle \nabla (f \circ d)\left(D_{x^\uparrow} \right), (P \oplus P)(\cdot)\left(P^\top \oplus P^\top \right)\right \rangle \\
            &= \left \langle \frac{1}{2} \left (P^\top \oplus P^\top \right ) D_{\nabla f\left(x^\uparrow \right)} \left (P \oplus P \right ), \cdot \right \rangle \\
            &= \left \langle \frac{1}{2} D_{P^\top \nabla f\left(x^\uparrow \right)} , \cdot \right \rangle.
    \end{align*}
    From the relation \eqref{eq:nabla-f-unordered}, we thus get
    \begin{align*}
        \nabla (f\circ d)(D_x) = \frac{1}{2} D_{\nabla f(x)}.
    \end{align*}
    \end{proof}

    The question about Fr\'echet differentiability of symplectic spectral functions is answered in the following main result of the section.
    \begin{boxed}{white}
         \begin{theorem} \label{thm:theorem_gradient}
            Let $f:\mathds{R}^{n}_{++} \to \mathds{R}$ be a symmetric function.
            The symplectic spectral function $f\circ d$ is Fr\'echet differentiable at $A \in \mathds{P}(2n)$ if and only if $f$ is Fr\'echet differentiable at $d(A)$.
            In this case, the gradient of the symplectic spectral function is given by
                  \begin{equation} \label{eq:equation for the gradient}
                     \nabla(f \circ d)(A) = \dfrac{1}{2}  M  D_{\nabla f(d(A))} M^\top,
                  \end{equation}
            for any choice $M \in \mathds{SP}^\uparrow(2n, A)$.
         \end{theorem}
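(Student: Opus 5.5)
For the sufficiency direction I will transport Lemma~\ref{th:gradient_diagonal} from the diagonal point $D_{d(A)}$ to $A$ by symplectic congruence. Fix $M \in \mathds{SP}^\uparrow(2n, A)$, so that $M^\top A M = D_{d(A)}$. Define $\psi:\mathds{P}(2n)\to\mathds{P}(2n)$ by $\psi(B) = M^\top B M$. This is the restriction of a linear map on $\mathds{S}(2n)$, so it is Fr\'echet differentiable with $D\psi(B)(H)=M^\top H M$. Since $d$ is invariant under symplectic congruence, $f\circ d = (f\circ d)\circ \psi$.

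If $f$ is Fr\'echet differentiable at $d(A)$, then Lemma~\ref{th:gradient_diagonal}, applied with $x=d(A)=d(A)^\uparrow$, gives Fr\'echet differentiability of $f\circ d$ at $\psi(A)=D_{d(A)}$, with gradient $\tfrac12 D_{\nabla f(d(A))}$. The chain rule then gives differentiability at $A$, and
\[
\langle \nabla(f\circ d)(A), H\rangle = \tfrac12\langle D_{\nabla f(d(A))}, M^\top H M\rangle = \tfrac12\langle M D_{\nabla f(d(A))}M^\top, H\rangle,
\]
which is \eqref{eq:equation for the gradient}. The gradient is unique, so the expression cannot depend on the choice of $M$. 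One can also check this directly: by Lemma~\ref{lemma2}, $d(A)$ block-refines $\nabla f(d(A))$, so \eqref{eq:sp-2na-char} gives $U D_{\nabla f(d(A))} U^\top = D_{\nabla f(d(A))}$ for every block orthosymplectic $U$.

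For the necessity direction, suppose $f\circ d$ is Fr\'echet differentiable at $A$. By the same congruence argument, now using $\psi^{-1}(B)=M^{-\top}BM^{-1}$, the map $f\circ d$ is Fr\'echet differentiable at $D_{d(A)}$. Consider the map $\iota:\mathds{R}^n_{++}\to\mathds{P}(2n)$, $\iota(y)=D_y$. It is linear, hence Fr\'echet differentiable, and $d(D_y)=y^\uparrow$. Since $f$ is symmetric, $f = f\circ d\circ \iota$ on $\mathds{R}^n_{++}$. The chain rule at $y=d(A)$ therefore gives Fr\'echet differentiability of $f$ at $d(A)$, with $\nabla f(d(A)) = \diag_s\bigl(\nabla(f\circ d)(D_{d(A)})\bigr)$, because $\diag_s$ is the transpose of $\Diag_s=\iota$.

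I expect no serious obstacle. The analytic difficulty has already been handled in Lemma~\ref{th:gradient_diagonal}, through the local Lipschitz continuity of $d$ and Proposition~\ref{Thm_sum fun diff}. The only points needing care are these: $\psi$ maps a neighbourhood of $A$ in $\mathds{P}(2n)$ onto a neighbourhood of $D_{d(A)}$, since it is a linear homeomorphism of $\mathds{S}(2n)$ preserving $\mathds{P}(2n)$; and the transpose identity $\langle X, M^\top H M\rangle=\langle MXM^\top,H\rangle$ holds in the Frobenius inner product.
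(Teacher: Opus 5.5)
Your proposal is correct and matches the paper's proof. For sufficiency, the paper also transports Lemma~\ref{th:gradient_diagonal} via the congruence $\psi(B)=M^\top BM$ and the chain rule, and checks independence of $M$ by the same block-refinement argument. For necessity, your composition $\psi^{-1}\circ\iota$ is exactly the paper's map $\phi(x)=M^{-\top}D_xM^{-1}$ written in two steps.
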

    \end{boxed}
    \begin{proof}
        Let $A \in \mathds{P}(2n)$ and $M \in \mathds{SP}^\uparrow(2n, A)$ be arbitrary.

        Assume that $f \circ d$ is Fr\'echet differentiable at $A$.
        Define $\phi: \mathds{R}^n_{++} \to \mathds{P}(2n)$ given by $\phi(x)\coloneqq M^{-\top} D_x M^{-1}$ for all $x \in \mathds{R}^n_{++}$.
        We have $f = (f \circ d) \circ \phi$.
        Since $f \circ d$ is assumed to be Fr\'echet differentiable at $\phi(d(A))=A$, and $\phi$ is clearly Fr\'echet differentiable at $d(A)$, we conclude that $f$ is Fr\'echet differentiable at $d(A)$.
        Conversely, assume that $f$ is Fr\'echet differentiable at $d(A)$.
        Define $\psi: \mathds{P}(2n) \to \mathds{P}(2n)$ by
        \begin{align*}
            \psi (B) = M^{\top} B M.
        \end{align*}
        The function $\psi$ is Fr\'echet differentiable on $\mathds{P}(2n)$, and its Fr\'echet derivative at every $B \in \mathds{P}(2n)$ is given by
        \begin{align}\label{eq:psi-derivative}
            D\psi (B) = M^{\top} (\cdot) M.
        \end{align}
        The symplectic spectral function satisfies $f \circ d= (f \circ d) \circ \psi$.
        By Lemma~\ref{th:gradient_diagonal} we have that $f \circ d$ is Fr\'echet differentiable at $\psi(A)=D_{d(A)}$, and we know that $\psi$ is Fr\'echet differentiable at $A$.
        Therefore, the composition $f \circ d= (f \circ d) \circ \psi$ is Fr\'echet differentiable at $A$.
        
        We now derive the gradient expression \eqref{eq:equation for the gradient} under the assumption that $f$ is Fr\'echet differentiable at $d(A)$.        
        By using the explicit derivative expression \eqref{eq:fd-gradient} of $f \circ d$ at $D_{d(A)}$, the linear map description \eqref{eq:psi-derivative}, and the chain-rule, we get that
        \begin{align*}
            \left \langle \nabla (f \circ d )(A), \cdot \right \rangle  
                &= D((f \circ d) \circ \psi )(A)(\cdot)\\
                &= \left(D(f \circ d )\left(D_{d(A)} \right) \circ D \psi(A) \right)(\cdot )\\
                &= D(f \circ d ) \left(D_{d(A)} \right) \left(M^\top (\cdot) M \right)  \\
                &= \left \langle \nabla (f \circ d) \left(D_{d(A)} \right), M^\top (\cdot) M \right \rangle\\
                &= \left \langle \frac{1}{2}  M D_{\nabla f(d(A))} M^{\top}, \cdot \right \rangle.
        \end{align*}
        This establishes~\eqref{eq:equation for the gradient}.

        We now explicitly argue that \eqref{eq:equation for the gradient} is independent of the choice of $M$.
        Let $M \in \mathds{SP}^\uparrow(2n, A)$ be fixed.
        For arbitrary $N \in \mathds{SP}^\uparrow(2n, A)$, the characterization \eqref{eq:sp-2na-char} implies that $M=NU$ where $U=\bigboxplus_{j=1}^r U_j$ and $U_j \in \mathds{OSP}(2n_j)$ for $1 \leq j \leq r$.
        Here, recall that $d(A)$ has $r$ distinct entries $\mu_1 < \ldots < \mu_r$, and each $\mu_j$ occurs $n_j$ times.
        The characterization \eqref{eq:sp-2na-char} actually gives that $U \in \mathds{SP}^\uparrow(2n, D_{d(A)})$ and hence it commutes with $D_{d(A)}$.
        Since $d(A)$ block-refines $\nabla f(d(A))$, as a consequence of Lemma~\ref{lemma2}, we also have $U$ commutes with $D_{\nabla f(d(A))}$.
        Therefore, we have
        \begin{align*}
            N D_{\nabla f(d(A))} N^{\top} = M D_{\nabla f(d(A))} M^{\top},
        \end{align*}
        which completes the proof.
    \end{proof}

\subsection{Clarke generalized derivative} \label{sec:symp-spec-func-clarke}

    We study the Clarke generalized directional derivative and gradient of symplectic spectral functions in this section.
    The main aim is to explicitly derive the Clarke generalized gradient of locally Lipschitz symplectic spectral functions.

    Let $f:\mathds{R}^n_{++} \rightarrow \mathds{R}$ be a symmetric function and let $x \in \mathds{R}^n_{++}$ such that $ x = x^\uparrow$ be a fixed vector.
    We shall assume throughout the section that $f$ is a locally Lipschitz function near $x$.
    This also implies that the symplectic spectral function $f \circ d$ is locally Lipschitz near $A \in \mathds{P}(2n)$ with $d(A)=x$, which is due to the fact that the symplectic eigenvalue vector map $d$ is locally Lipschitz on $\mathds{P}(2n)$ \cite[Th.~7]{Onsymplecticeigenvalue}.
    Also, suppose $x$ has $r$ distinct entries and $k_1, \ldots, k_r$ are the positive integers associated with $x$ such that $x_j = x_{k_\ell}$ for all $k_{\ell-1} < j \leq k_\ell$ and $1\leq \ell \leq r$, with the convention $ k_0 \coloneqq 0 $.
    Recall that $\mathds{DS}(k)$ denotes the set of $k \times k$ doubly stochastic matrices.
    The following known result will be useful later. 
    \begin{boxed}{white}
          \begin{lemma}{\cite[Lem.~3.9]{SpectralfunctionLewis}} \label{th:dubly-stochastic matrix and generalized gradient}
            For every $\alpha \in \partial f (x)$ and doubly stochastic matrix $S$ of the form $S= \oplus_{j=1}^r S_j$, where $S_j \in \mathds{DS}(k_j - k_{j - 1})$ for $1 \leq j \leq r$, we have $S \alpha \in \partial f(x)$.
        \end{lemma}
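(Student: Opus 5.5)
The plan is to prove the statement directly from the support-function description of $\partial f(x)$, using only the symmetry of $f$ and convexity of $\partial f(x)$. First, I reduce to permutation matrices. By Birkhoff's theorem, each $S_j \in \mathds{DS}(k_j-k_{j-1})$ is a convex combination of permutation matrices of that size. Expanding the direct sum multilinearly then writes $S=\oplus_{j=1}^r S_j$ as a convex combination of block permutation matrices $P=\oplus_{j=1}^r P_j$ with $P_j \in \Pi(k_j-k_{j-1})$. Since $\partial f(x)$ is convex, it is enough to show $P\alpha \in \partial f(x)$ for every such block permutation $P$.

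Next, I use that every such $P$ fixes $x$, because it only permutes coordinates inside blocks on which $x$ is constant; thus $Px=x$ and likewise $P^\top x=x$. Then I compare Clarke directional derivatives. For $u \in \mathds{R}^n$, symmetry of $f$ gives $f(y+tu)-f(y)=f(P^\top y+tP^\top u)-f(P^\top y)$. As $y \to x$ we have $P^\top y \to P^\top x = x$, and $y \mapsto P^\top y$ is a bijection. Taking the $\limsup$ therefore yields
\begin{equation*}
f^\circ(x;u)=f^\circ(x;P^\top u).
\end{equation*}

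Finally, for $\alpha\in\partial f(x)$ and any $u$,
\begin{equation*}
\langle P\alpha,u\rangle=\langle \alpha,P^\top u\rangle\le f^\circ(x;P^\top u)=f^\circ(x;u),
\end{equation*}
so $P\alpha\in\partial f(x)$. Convexity of $\partial f(x)$ then gives $S\alpha\in\partial f(x)$.

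The only point needing care is the Birkhoff decomposition of the direct sum. Writing $S_j=\sum_{i}\lambda^{(j)}_i P^{(j)}_i$, one has $\oplus_j S_j=\sum_{i_1,\ldots,i_r}\lambda^{(1)}_{i_1}\cdots\lambda^{(r)}_{i_r}\,\oplus_j P^{(j)}_{i_j}$. The coefficients are nonnegative and sum to one, so this is a genuine convex combination and the step is routine.
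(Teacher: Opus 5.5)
Your proof is correct and complete. You reduce to block permutations via Birkhoff's theorem and the product expansion of $\oplus_j S_j$, derive $f^\circ(x;u)=f^\circ(x;P^\top u)$ from the symmetry of $f$ and $P^\top x=x$, and then use convexity of $\partial f(x)$. The paper gives no proof of its own and simply imports the statement from Lewis \cite{SpectralfunctionLewis}; your self-contained argument is the standard one for this fact, namely invariance of the Clarke data under permutations fixing $x$, combined with Birkhoff's theorem and convexity.
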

    \end{boxed}
    Consider the following set associated with $f$ and $x$ that will be useful in the development of the technical results in this section:
    \begin{align}
        \mathscr{E}(f, x) &\coloneqq \left\{\frac{1}{2} U D_\alpha U^\top  : U \in \mathds{SP}^\uparrow(2n, D_x), \alpha \in \partial f(x) \right\}. \label{eq:ex-def}
    \end{align}    
    We now provide an alternate description of \eqref{eq:ex-def} as follows.
    \begin{boxed}{white}
            \begin{proposition} \label{th:second set}
           The set $\mathscr{E}(f, x)$ is precisely the collection of matrices of the form $\bigboxplus_{j=1}^r E_j$,
           where each matrix $E_j$ is described as
                \begin{equation}\label{eq:elements-Ex-form}
                    E_j = \frac{1}{2}    
                            \begin{bmatrix}
                                 A_j &  B_j \\
                                - B_j &   A_j
                            \end{bmatrix},
              \end{equation}
            such that $A_j \in \s(k_j-k_{j-1})$ and $B_j \in \mathds{SS}(k_j-k_{j-1})$ for $ 1 \le j \le r $
            and they satisfy
            \begin{align} \label{eq:ex-matrix-satisfies}
                \left(\lambda \left(\widehat{E_1} \right)^\top,\ldots, \lambda \left(\widehat{E_r} \right)^\top \right)^\top \in \partial f(x),
            \end{align}
            where recall that $\widehat{E}_j$ is the Hermitian matrix $A_j + i B_j$ for all $1 \leq j \leq r$.
        \end{proposition}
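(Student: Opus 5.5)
We prove the two inclusions separately, using the description \eqref{eq:sp-2na-char} of $\mathds{SP}^\uparrow(2n, D_x)$ applied with $M = I_{2n}$. This gives that every $U \in \mathds{SP}^\uparrow(2n, D_x)$ has the form $U=\bigboxplus_{j=1}^r U_j$, where
\[
U_j=\begin{bsmallmatrix} X_j & Y_j \\ -Y_j & X_j\end{bsmallmatrix}\in\mathds{OSP}(2n_j), \qquad n_j = k_j-k_{j-1},
\]
and $W_j \coloneqq X_j+iY_j$ is unitary. The key computational fact is that the map $E\mapsto \widehat{E}$ is multiplicative on matrices of the form $\begin{bsmallmatrix} P & Q\\ -Q & P\end{bsmallmatrix}$, and that it respects $s$-direct sums blockwise. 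In addition, $\widehat{U^\top}=W^*$ for orthosymplectic $U$, and $\widehat{D_\beta}=2\Diag(\beta)$ under the paper's convention $\widehat{E}=(A+H)+i(B-G)$. We must be careful with the factor $2$ here. For $E=\tfrac12\begin{bsmallmatrix} A & B\\ -B & A\end{bsmallmatrix}$ the convention gives $\widehat{E}=A+iB$, consistent with the statement. Likewise $\widehat{D_\beta}$, read via $\tfrac12 D_\beta=\tfrac12\begin{bsmallmatrix}\Diag\beta&0\\0&\Diag\beta\end{bsmallmatrix}$, gives $\Diag(\beta)$.

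\emph{Inclusion $\subseteq$.} Take $U$ as above and $\alpha\in\partial f(x)$, and split $\alpha=(\alpha^{(1)},\ldots,\alpha^{(r)})$ according to the blocks. Since both $U$ and $D_\alpha$ are $s$-direct sums with the same block structure,
\[
\tfrac12 UD_\alpha U^\top=\bigboxplus_j \tfrac12 U_j D_{\alpha^{(j)}}U_j^\top .
\]
A direct multiplication shows that each block has the form \eqref{eq:elements-Ex-form}, with
\[
A_j=X_j\Diag(\alpha^{(j)})X_j^\top+Y_j\Diag(\alpha^{(j)})Y_j^\top
\]
symmetric and
\[
B_j=Y_j\Diag(\alpha^{(j)})X_j^\top - X_j\Diag(\alpha^{(j)})Y_j^\top
\]
skew-symmetric. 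Moreover $A_j+iB_j=W_j\Diag(\alpha^{(j)})W_j^*$, up to checking the sign convention, possibly with $\overline{W_j}$ in place of $W_j$. In either case $\lambda(\widehat{E_j})=(\alpha^{(j)})^\downarrow$. The stacked vector $\big(\lambda(\widehat{E_1})^\top,\ldots,\lambda(\widehat{E_r})^\top\big)^\top$ is therefore obtained from $\alpha$ by a block-diagonal permutation $\oplus_j P_j$. This is a doubly stochastic matrix of the form required in Lemma~\ref{th:dubly-stochastic matrix and generalized gradient}, so the vector lies in $\partial f(x)$, which is \eqref{eq:ex-matrix-satisfies}.

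\emph{Inclusion $\supseteq$.} Given blocks $E_j$ of the form \eqref{eq:elements-Ex-form} satisfying \eqref{eq:ex-matrix-satisfies}, diagonalize each Hermitian matrix as $A_j+iB_j=W_j\Diag(\lambda(\widehat{E_j}))W_j^*$ with $W_j$ unitary. Write $W_j=X_j+iY_j$ and let $U_j$ be the associated orthosymplectic matrix. Set $U=\bigboxplus_j U_j$ and $\alpha=(\lambda(\widehat{E_1}),\ldots,\lambda(\widehat{E_r}))$, which lies in $\partial f(x)$ by assumption. Then $U\in\mathds{SP}^\uparrow(2n,D_x)$ by \eqref{eq:sp-2na-char}. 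The injectivity of $E\mapsto\widehat{E}$ on matrices of the form $\begin{bsmallmatrix} P & Q\\ -Q & P\end{bsmallmatrix}$, together with the same block computation run in reverse, gives $\tfrac12 U_jD_{\alpha^{(j)}}U_j^\top=E_j$. Hence $\bigboxplus_j E_j=\tfrac12 UD_\alpha U^\top\in\mathscr{E}(f,x)$.

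The main obstacle is purely bookkeeping rather than conceptual. We need the block multiplication identity relating $U_jD_\beta U_j^\top$ to $W_j\Diag(\beta)W_j^*$ with the correct signs (conjugate or not) and the correct factor $\tfrac12$. We also need to confirm that the $s$-direct sum interacts with products blockwise. Once the realification homomorphism is fixed consistently with the definition of $\widehat{\cdot}$, both directions follow. The only nontrivial input is Lemma~\ref{th:dubly-stochastic matrix and generalized gradient}, which absorbs the reordering of eigenvalues within blocks.
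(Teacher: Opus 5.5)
Your proposal is correct and follows the paper's proof essentially step for step. The paper uses the same block computation giving $A_j$ and $B_j$ with $\widehat{E_j}=(X_j+iY_j)\Diag(\alpha_{(j)})(X_j^\top-iY_j^\top)$ (so no conjugate is needed), the same appeal to Lemma~\ref{th:dubly-stochastic matrix and generalized gradient} for the block-diagonal permutation, and the same unitary-diagonalization construction for the converse. The only thing to tidy is that $E\mapsto\widehat{E}$ as defined in the paper is multiplicative only up to a factor (it is $E\mapsto\tfrac12\widehat{E}$ that is the homomorphism). You already flag this, and it does not affect the argument.
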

    \end{boxed}
    \begin{proof}
        We first show that every matrix in $\mathscr{E}(f, x)$ is of the form $\bigboxplus_{j=1}^r E_j$ satisfying \eqref{eq:elements-Ex-form} and \eqref{eq:ex-matrix-satisfies}.
        An arbitrary element $E$ of $\mathscr{E}(f,x)$ is of the form 
         \begin{equation}\label{eq:e-in-Ex}
             E  \coloneqq \frac{1}{2} U D_\alpha U^\top,
         \end{equation}
         for some $U \in \mathds{SP}^\uparrow(2n, D_x)$ and $\alpha \in \partial f(x)$.
        Let  $\alpha_{(j)} \in \mathds{R}^{k_j-k_{j-1}}$ be the subvector of $\alpha$ consisting of its entries with indices in $(k_{j-1}, k_j]$, so that $\alpha=({\alpha_{(1)}}^\top,\ldots, {\alpha_{(r)}}^{\top})^\top$. 
        \sloppy We know by \eqref{eq:sp-2na-char} that $U= \bigboxplus_{j=1}^rU_j$, where $U_j \in \mathds{OSP}(2(k_j-k_{j-1}))$ for $1 \leq j \leq r$.
        Suppose $U_j$ is given by $\begin{bsmallmatrix} X_j & Y_j \\ -Y_j & X_j \end{bsmallmatrix}$ where $X_j, Y_j \in \mathds{M}(k_j-k_{j-1})$ and  $X_j+iY_j$ is a unitary matrix.
        By substituting the aforementioned representation of $U$ into \eqref{eq:e-in-Ex}, we get that $E$ is of the form $\bigboxplus_{j=1}^r E_j$ with each $E_j$ conforming to \eqref{eq:elements-Ex-form}, where
        \begin{align*}
            A_j &=  X_j \Diag(\alpha_{(j)})  X_j^\top+
                    Y_j \Diag(\alpha_{(j)}) Y_j^\top,\\
            B_j &=  Y_j \Diag(\alpha_{(j)}) X_j^\top - X_j \Diag(\alpha_{(j)}) Y_j^\top.
        \end{align*}
        Also, it is straightforward to see that $A_j \in \s(k_j-k_{j-1})$ and $B_j \in \mathds{SS}(k_j-k_{j-1})$ for $ 1 \le j \le r $.
         We also have
        \begin{align*}
            \widehat{E_j}
                &= A_j+iB_j \\
                &= X_j \Diag(\alpha_{(j)})  X_j^\top+
                    Y_j \Diag(\alpha_{(j)}) Y_j^\top + i \left(Y_j \Diag(\alpha_{(j)}) X_j^\top - X_j \Diag(\alpha_{(j)}) Y_j^\top \right) \\
                &=  (X_j+iY_j) \Diag(\alpha_{(j)}) (X_j^\top -iY_j^\top).
        \end{align*}
        This implies that $\lambda (\widehat{E_j} )= P_j\alpha_{(j)}$ for some $P_j \in \Pi(k_{j}-k_{j-1})$ for all $1 \leq j \leq r$.
        Choose $P\coloneqq \oplus_{j=1}^r P_j$ so that
        \begin{align*}
            \left(\lambda \left(\widehat{E_1} \right)^\top,\ldots, \lambda \left(\widehat{E_r} \right)^\top\right)^\top 
                &= P \alpha.
        \end{align*}
        It thus follows from Lemma~\ref{th:dubly-stochastic matrix and generalized gradient} that the inclusion \eqref{eq:ex-matrix-satisfies} holds.
        
        Conversely, we show that every matrix $E=\bigboxplus_{j=1}^rE_j$ given by \eqref{eq:elements-Ex-form} which satisfies \eqref{eq:ex-matrix-satisfies} belongs to $\mathscr{E}(f, x)$.
        Let $X_j, Y_j \in \mathds{M}(k_j-k_{j-1})$ such that $X_j+i Y_j$ is a unitary matrix diagonalizing the Hermitian matrix $\widehat{E_j}$, i.e., $ \widehat{E_j} = (X_j+iY_j) \Diag (\lambda (\widehat{E_j} ) )  (X_j^\top - i Y_j^\top)$  for all $1 \leq j \leq r$. 
        Set $U \coloneqq \bigboxplus_{j=1}^rU_j \in \mathds{SP}^\uparrow(2n, D_x)$, where $U_j$ is the orthosymplectic matrix corresponding to the unitary matrix $X_j+i Y_j$.
        Choose $y=(\lambda (\widehat{E_1})^\top,\ldots, \lambda (\widehat{E_r})^\top)^\top$.
        This gives $E = \frac{1}{2} U D_y U^\top \in \mathscr{E}(f, x)$.
    \end{proof}
    \begin{boxed}{white}
          \begin{proposition} \label{thm:second set convex}
                $\mathscr{E}(f, x) $ is a compact convex set.
           \end{proposition}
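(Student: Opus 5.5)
Compactness and convexity will both be read off the description of $\mathscr{E}(f,x)$ in Proposition~\ref{th:second set}, combined with the fact that $\partial f(x)$ is non-empty, compact and convex. The characterization from Proposition~\ref{th:second set} is more convenient than the defining formula \eqref{eq:ex-def}.

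\emph{Compactness.} Consider the map $\Theta:\mathds{SP}^\uparrow(2n,D_x)\times\partial f(x)\to\s(2n)$ given by $\Theta(U,\alpha)=\frac12 UD_\alpha U^\top$. Its image is $\mathscr{E}(f,x)$, and it is continuous.
\begin{itemize}
\item By \eqref{eq:sp-2na-char}, with $M$ taken to be the identity, the set $\mathds{SP}^\uparrow(2n,D_x)$ consists of the $s$-direct sums $\bigboxplus_j U_j$ with $U_j\in\mathds{OSP}(2(k_j-k_{j-1}))$.
\item Each factor $\mathds{OSP}(2k)$ is closed and bounded, since it is cut out by polynomial equations inside the orthogonal group. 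Hence $\mathds{SP}^\uparrow(2n,D_x)$ is compact, even though $\mathds{SP}(2n)$ itself is not.
\item $\partial f(x)$ is compact.
\end{itemize}
The image of a compact set under a continuous map is compact, so $\mathscr{E}(f,x)$ is compact.

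\emph{Convexity.} Take $E=\bigboxplus_j E_j$ and $E'=\bigboxplus_j E'_j$ in $\mathscr{E}(f,x)$, both in the form \eqref{eq:elements-Ex-form}, and let $t\in[0,1]$.
\begin{itemize}
\item The combination $tE+(1-t)E'$ is again an $s$-direct sum of blocks of the form \eqref{eq:elements-Ex-form}, because symmetric and skew-symmetric blocks are closed under convex combinations.
\item Its associated Hermitian blocks are $t\widehat{E_j}+(1-t)\widehat{E'_j}$, since the hat map is linear.
\end{itemize}
It therefore suffices to show that the vector $z\coloneqq(\lambda(t\widehat{E_1}+(1-t)\widehat{E'_1})^\top,\ldots)^\top$ lies in $\partial f(x)$.

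The main step is a majorization argument applied block by block. By Ky Fan's inequality for Hermitian matrices, equivalently the convexity of sums of the largest eigenvalues,
\[
\lambda(tH+(1-t)K)\prec t\lambda(H)+(1-t)\lambda(K).
\]
From the equivalences listed in the preliminaries, this gives a doubly stochastic $S_j$ with $\lambda(t\widehat{E_j}+(1-t)\widehat{E'_j})=S_j\big(t\lambda(\widehat{E_j})+(1-t)\lambda(\widehat{E'_j})\big)$. Now define
\begin{itemize}
\item $\alpha$, the concatenation of the vectors $\lambda(\widehat{E_j})$;
\item $\alpha'$, the concatenation of the vectors $\lambda(\widehat{E'_j})$.
\end{itemize}
Both lie in $\partial f(x)$ by \eqref{eq:ex-matrix-satisfies}, so $t\alpha+(1-t)\alpha'\in\partial f(x)$ by convexity of the Clarke gradient. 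Then
\[
z=\Big(\oplus_{j=1}^r S_j\Big)\big(t\alpha+(1-t)\alpha'\big),
\]
and Lemma~\ref{th:dubly-stochastic matrix and generalized gradient} gives $z\in\partial f(x)$. By Proposition~\ref{th:second set}, the matrix $tE+(1-t)E'$ then belongs to $\mathscr{E}(f,x)$, which proves convexity.
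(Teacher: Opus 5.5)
Your proposal is correct and follows essentially the same route as the paper. Compactness comes from $\mathscr{E}(f,x)$ being a continuous image of $\partial f(x)\times\mathds{SP}^\uparrow(2n,D_x)$. Convexity comes from the block description of Proposition~\ref{th:second set}, a doubly stochastic $S_j$ with $\lambda(t\widehat{E_j}+(1-t)\widehat{E'_j})=S_j\big(t\lambda(\widehat{E_j})+(1-t)\lambda(\widehat{E'_j})\big)$, convexity of $\partial f(x)$, and Lemma~\ref{th:dubly-stochastic matrix and generalized gradient}.

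There are two small differences. You derive $S_j$ from Ky Fan's majorization, where the paper cites Lewis's eigenvalue lemma directly. You also justify the compactness of $\mathds{SP}^\uparrow(2n,D_x)$ explicitly, which the paper takes for granted.
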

    \end{boxed}
   \begin{proof}
        The set $\mathscr{E}(f, x)$ being a continuous image of the compact set $ \partial f(x) \times \mathds{SP}^\uparrow(2n, D_x) $ is also compact.
        To show its convexity, let $G, H \in \mathscr{E}(f, x)$ and $t \in (0, 1)$ be arbitrary. 
        We know by Proposition~\ref{th:second set} that $G= \bigboxplus_{j=1}^rG_j$ and $H=\bigboxplus_{j=1}^rH_j$ where
        \begin{align*}
            G_j 
                &=   \frac{1}{2}    
                            \begin{bmatrix}
                                 \phantom{-} A_j &  B_j \\
                                - B_j &   A_j
                            \end{bmatrix}, \\
            H_j
                &=  \frac{1}{2}    
                            \begin{bmatrix}
                                 \phantom{-} E_j &  F_j \\
                                - F_j &   E_j
                            \end{bmatrix},
        \end{align*}
        where $A_j, E_j \in \s(k_j-k_{j-1})$ and $B_j, F_j \in \mathds{SS}(k_j-k_{j-1})$ for $ 1 \le j \le r $, such that  
            \begin{align}\label{eq:lambda-gi-hi-gradient-fx}
                  \left(\lambda \left(\widehat{G_1}\right)^\top,\ldots, \lambda \left(\widehat{G_r}\right)^\top\right)^\top, \left(\lambda \left(\widehat{H_1}\right)^\top,\ldots, \lambda \left(\widehat{H_r}\right)^\top \right)^\top \in \partial f(x).
            \end{align} 
         Consider the convex combination $t G + (1-t) H = \bigboxplus_{j=1}^r\left( t G_j + (1-t)H_j \right)$.
        A well-known result on eigenvalues of Hermitian matrices \cite[Lem.~3.11]{SpectralfunctionLewis} implies that there exists $S_j \in \mathds{DS}(k_j - k_{j-1})$ such that 
           \begin{equation} \label{eq:dublystochastic_eigenvalue}
               \lambda \left( t \widehat{G_j} + (1 - t)\widehat{H_j} \right) = S_j \left( t \lambda \left(\widehat{G_j} \right) + (1-t) \lambda \left(\widehat{H_j} \right) \right).
           \end{equation}
        Since $ \partial f(x) $ is a convex set, we get from \eqref{eq:lambda-gi-hi-gradient-fx} that
            \begin{equation*}
                \left( t  \lambda \left(\widehat{G_1} \right)^\top + (1-t) \lambda \left(\widehat{H_1} \right)^\top,\ldots,  t  \lambda \left(\widehat{G_r} \right)^\top + (1-t) \lambda \left(\widehat{H_r} \right)^\top \right)^\top \in \partial f (x).
            \end{equation*}
        Using Lemma~\ref{th:dubly-stochastic matrix and generalized gradient} and the relation \eqref{eq:dublystochastic_eigenvalue}, we get
            \begin{equation*}
             \left(\lambda \left( t \widehat{G_1} + (1 - t)\widehat{H_1} \right)^\top, \ldots, \lambda \left( t \widehat{G_r} + (1 - t)\widehat{H_r} \right)^\top \right)^\top \in \partial f(x).   
            \end{equation*}
        Therefore, we have $ t G + (1 - t) H \in \mathscr{E}(f, x) $ implying that $ \mathscr{E}(f, x)$ is a convex set.
   \end{proof}
    \begin{boxed}{white}
     \begin{lemma} \label{thm:first lemma of section 5}
        Let $A \in \mathds{P}(2n)$ such that $d(A)=x$.
        The Clarke generalized directional derivative of the symplectic spectral function $f \circ d$ at $A$ satisfies for every $M \in \mathds{SP}(2n)$ the following identity
              \begin{equation}
                  (f \circ d)^\circ (A ; \cdot ) = (f \circ d)^\circ \left( M^\top A M ; M^\top (\cdot) M \right).
              \end{equation}
     \end{lemma}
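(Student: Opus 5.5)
The plan is to prove the identity directly from the definition of the Clarke generalized directional derivative, using two facts. First, $f\circ d$ is invariant under symplectic congruence. Second, the congruence map $\psi_M(B) \coloneqq M^\top B M$ is a linear bijection of $\mathds{S}(2n)$ that maps $\mathds{P}(2n)$ onto itself. Fix $M \in \mathds{SP}(2n)$ and a direction $H \in \mathds{S}(2n)$. For $B \in \mathds{P}(2n)$ near $A$ and $t>0$ small, linearity of $\psi_M$ and $d(M^\top C M)=d(C)$ give
\begin{align*}
    \frac{(f\circ d)(B+tH)-(f\circ d)(B)}{t}
    = \frac{(f\circ d)\left(M^\top B M + t\, M^\top H M\right)-(f\circ d)\left(M^\top B M\right)}{t}.
\end{align*}
The left-hand side is the quotient defining $(f\circ d)^\circ(A;H)$ at the base point $B$. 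The right-hand side is the quotient defining $(f\circ d)^\circ(M^\top A M; M^\top H M)$ at the base point $B' \coloneqq M^\top B M$.

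Next I will show that the two $\limsup$'s range over the same family of quotients. Because $\psi_M$ is a homeomorphism of $\mathds{P}(2n)$, with continuous inverse $\psi_{M^{-1}}$, we have $B \to A$ if and only if $B' \to M^\top A M$. Hence the two $\limsup$'s, taken over $B\to A$, $t\searrow 0$ and over $B'\to M^\top A M$, $t\searrow 0$ respectively, coincide. Concretely, for every neighbourhood $\mathscr{N}$ of $A$ and every $\tau>0$, the supremum of the quotient over $B\in\mathscr{N}$, $0<t<\tau$ equals the supremum over $B'\in\psi_M(\mathscr{N})$, $0<t<\tau$. The sets $\psi_M(\mathscr{N})$ form a neighbourhood base at $M^\top A M$, so passing to the infimum over neighbourhoods gives equality.

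Finiteness of both sides is guaranteed because $f\circ d$ is locally Lipschitz near $A$, and then also near $M^\top A M$, which has the same symplectic eigenvalues $x$. This is noted at the start of the section.

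The only point needing care is this neighbourhood-base argument. It is where the non-orthogonality of $M$ might seem to matter, since $\psi_M$ does not preserve the Frobenius norm. It does not matter, because the $\limsup$ depends only on the topology, not the metric, and $\psi_M$ is a linear homeomorphism. I expect no real obstacle beyond stating this cleanly.
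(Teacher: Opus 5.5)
Your proposal is correct and matches the paper's proof. Both rewrite the difference quotient using $d(M^\top C M)=d(C)$ and the linearity of $B\mapsto M^\top B M$, then substitute $\widehat{Y}=M^\top Y M$ in the $\limsup$. Your neighbourhood-base argument just makes explicit the change of variables that the paper carries out in a single line.
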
  
    \end{boxed}

    \begin{proof}
        Since $ d(A)=x$, we have that $f \circ d$ is locally Lipschitz near $A$ so that its Clarke generalized directional derivative at $A$ is defined.
        Let $ M \in \mathds{SP}(2n)$ be arbitrary.
        We emphasize that $f \circ d$ is also locally Lipschitz near $M^\top A M$ so that its Clarke generalized directional derivative at $M^\top A M$ is also defined.
        We have for $H \in \mathds{S}(2n)$
        \begin{align*}
            (f \circ d)^\circ (A; H) 
                &= \limsup_{ Y \to A, t \searrow 0  } \frac{(f \circ d) (Y + t H) - (f \circ d)(Y)}{t} \\   
                &= \limsup_{ Y \to A, t \searrow 0  } \frac{f \left( d (Y + t H)) - f (d(Y) \right)}{t} \\  
                &= \limsup_{ \widehat{Y} \to M^\top AM, t \searrow 0  } \frac{f \left( d \left(\widehat{Y} + t M^\top H M \right) \right) - f \left( d \left( \widehat{Y} \right) \right)}{t} \\
                &= \left( f \circ d \right)^\circ \left( M^\top A M ; M^\top H M \right).    
        \end{align*}
    \end{proof}

    We recall a result concerning real-valued locally Lipschitz functions on Euclidean space that will be useful later. 
    Let $\phi$ be a real-valued map on an open subset $\mathscr{O}$ of $\mathds{R}^n$ which is locally Lipschitz near $\xi \in \mathscr{O}$.
    Rademacher's theorem \cite[Th.~9.1.2]{Borwein} guarantees that $\phi $ is Fr\'echet differentiable almost everywhere in a neighborhood $\mathscr{O}_{\xi}\subset \mathscr{O} $ of $\xi $.
    Suppose $\mathscr{D}_\phi \subset \mathscr{O} $ is the set of points where $\phi$ is Fr\'echet differentiable.
    We then have
            \begin{equation} \label{eq:conv hull of limit points}
                   \partial \phi(\xi) = \operatorname{conv} \left\{\lim_{\substack{\ell \to \infty}} \nabla \phi\left(\xi^{(\ell)}\right) : \xi^{(\ell)} \in \mathscr{D}_\phi \  \forall \ell \in \mathbb{N}, \quad  \lim_{\ell \to \infty} \xi^{(\ell)} = \xi\right\},
             \end{equation}
    where it is understood that the limit inside the set is taken whenever it exists, 
    and
               \begin{equation} \label{eq: Directional derivative}
                   \phi^\circ(\xi; u) = \limsup_{\eta \to \xi} \left\{ \left \langle \nabla \phi(\eta), u \right \rangle : \eta \in \mathscr{D}_\phi  \right\}, \qquad \forall u \in \mathds{R}^n.
               \end{equation}
    See \cite[Th.~2.5.1]{Clarke}.
    From the above discussion, 
    there exists a sequence $\{\xi^{(\ell)}\}_{\ell \in \mathds{N}}$ in $\mathscr{D}_\phi$ converging to $\xi$ such that
    \begin{align} \label{eq:rademacher-limit-clarke-derivative}
        \phi^\circ(\xi; u) = \lim_{\ell \to \infty} \left \langle \nabla \phi \left(\xi^{(\ell)} \right), u \right \rangle.
    \end{align}

    \begin{boxed}{white}
     \begin{theorem}\label{th:fd-clark-f-clarke}
        The Clarke generalized directional derivative of the symplectic spectral function $f \circ d$ at $D_x$ is given for all $H \in \s(2n)$ by 
             \begin{equation} \label{eq:second result in second 5}
                 (f \circ d)^\circ \left(  D_x  ; H \right) = \frac{1}{2}\max_{ U \in \mathds{SP}^\uparrow(2n, D_x) } f^\circ \left( x ; \operatorname{diag}_s ( U^\top  H U )\right).
             \end{equation}
     \end{theorem}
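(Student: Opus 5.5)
We prove the two inequalities separately. Write $\mathscr{D}_f$ for the set of points where $f$ is Fréchet differentiable, and set $\mathscr{D}_{f\circ d}=\{Y\in\mathds{P}(2n): d(Y)\in\mathscr{D}_f\}$. By Theorem~\ref{thm:theorem_gradient}, $\mathscr{D}_{f\circ d}$ is exactly the set of points where $f\circ d$ is Fréchet differentiable.

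\emph{Inequality ``$\geq$''.} Fix $U\in\mathds{SP}^\uparrow(2n,D_x)$ and put $u=\operatorname{diag}_s(U^\top HU)$. By \eqref{eq:rademacher-limit-clarke-derivative} applied to $f$, pick $x^{(\ell)}\in\mathscr{D}_f$ with $x^{(\ell)}\to x$ and $\langle\nabla f(x^{(\ell)}),u\rangle\to f^\circ(x;u)$. Consider $Y_\ell:=U^{-\top}D_{x^{(\ell)}}U^{-1}$. Since $U$ is orthosymplectic, $Y_\ell=UD_{x^{(\ell)}}U^\top\to UD_xU^\top=D_x$.

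By Corollary~\ref{cor:frechet-gradient-without-ordering} combined with the chain rule (as in the proof of Theorem~\ref{thm:theorem_gradient}, using $\psi(B)=U^\top BU$), $f\circ d$ is Fréchet differentiable at $Y_\ell$ with
\[
\nabla(f\circ d)(Y_\ell)=\tfrac12 UD_{\nabla f(x^{(\ell)})}U^\top.
\]
This holds without requiring $x^{(\ell)}$ to be ordered. Consequently
\[
\langle\nabla(f\circ d)(Y_\ell),H\rangle=\tfrac12\langle D_{\nabla f(x^{(\ell)})},U^\top HU\rangle=\tfrac12\langle\nabla f(x^{(\ell)}),u\rangle,
\]
because $\operatorname{diag}_s$ is the transpose of $\operatorname{Diag}_s$. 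Applying \eqref{eq: Directional derivative} to $f\circ d$ then gives $(f\circ d)^\circ(D_x;H)\ge\frac12 f^\circ(x;u)$ for every such $U$. The maximum in \eqref{eq:second result in second 5} is attained because $\mathds{SP}^\uparrow(2n,D_x)$ is compact by \eqref{eq:sp-2na-char} and $f^\circ(x;\cdot)$ is continuous, being Lipschitz.

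\emph{Inequality ``$\leq$''.} Use \eqref{eq:rademacher-limit-clarke-derivative} for $f\circ d$ to get $Y_\ell\in\mathscr{D}_{f\circ d}$, $Y_\ell\to D_x$, with
\[
\langle\nabla(f\circ d)(Y_\ell),H\rangle\to(f\circ d)^\circ(D_x;H).
\]
By Theorem~\ref{thm:theorem_gradient}, $\nabla(f\circ d)(Y_\ell)=\frac12M_\ell D_{g_\ell}M_\ell^\top$ with $M_\ell\in\mathds{SP}^\uparrow(2n,Y_\ell)$ and $g_\ell=\nabla f(d(Y_\ell))$. Hence the quantity above equals $\frac12\langle g_\ell,\operatorname{diag}_s(M_\ell^\top HM_\ell)\rangle$.

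The key step, and the main obstacle, is to choose the $M_\ell$ bounded, so that after passing to a subsequence $M_\ell\to U$ with necessarily $U\in\mathds{SP}^\uparrow(2n,D_x)$ (by continuity of $d$ and closedness of the defining relations). Because the symplectic group is noncompact, boundedness is not automatic. I would try first to use the standard construction $M=A^{-1/2}W$, with $W$ orthogonal, coming from the real Schur form of $A^{-1/2}JA^{-1/2}$. This gives bounds of the form $\|M\|^2\le\|A^{-1}\|\,d_n(A)$, which stay uniform near $D_x$. If that estimate is not easily available, the perturbation bounds of \cite{BabuMishra2024_blockperturbation} or \cite{mishrafirst} serve the same purpose.

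Also $g_\ell$ is bounded by the local Lipschitz constant of $f$. Passing to a further subsequence, $g_\ell\to\alpha$, and $\alpha\in\partial f(x)$ by \eqref{eq:conv hull of limit points}, since $d(Y_\ell)\to x$. Then
\[
\langle g_\ell,\operatorname{diag}_s(M_\ell^\top HM_\ell)\rangle\to\langle\alpha,\operatorname{diag}_s(U^\top HU)\rangle\le f^\circ(x;\operatorname{diag}_s(U^\top HU)),
\]
because $f^\circ(x;\cdot)$ is the support function of $\partial f(x)$. This yields ``$\le$'' and completes the plan.
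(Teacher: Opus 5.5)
Your proposal is correct and follows essentially the same route as the paper. Both inequalities come from the limiting-gradient representations \eqref{eq: Directional derivative}--\eqref{eq:rademacher-limit-clarke-derivative}, and ``$\le$'' uses a convergent subsequence of symplectic diagonalizers $M_\ell\to U\in\mathds{SP}^\uparrow(2n,D_x)$; your choice $Y_\ell=UD_{x^{(\ell)}}U^\top$ in ``$\ge$'' merely inlines the invariance Lemma~\ref{thm:first lemma of section 5} that the paper invokes.

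The boundedness you flag as the main obstacle holds for every choice $M_\ell\in\mathds{SP}^\uparrow(2n,Y_\ell)$, so no special construction is needed. Indeed, $\lambda_{\min}(Y_\ell)\,M_\ell^\top M_\ell\le M_\ell^\top Y_\ell M_\ell=D_{d(Y_\ell)}$ in the L\"owner order gives $\|M_\ell\|^2\le 2\sum_{j=1}^n d_j(Y_\ell)/\lambda_{\min}(Y_\ell)$, which stays bounded as $Y_\ell\to D_x$. This is the ``straightforward'' verification the paper alludes to.
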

     \end{boxed}

    \begin{proof}
        Let $H \in \mathds{S}(2n) $ be arbitrary. 
        We know from the development \eqref{eq: Directional derivative}--\eqref{eq:rademacher-limit-clarke-derivative} that there exists a sequence $ \{Y_\ell \}_{\ell \in \mathds{N}} $ in $\mathds{P}(2n)$ converging to $D_x$, such that $ f \circ d $ is Fr\'echet differentiable at each $ Y_\ell$ and 
        \begin{equation} \label{eq:clarke-der-f-limit-yk}
           ( f \circ d)^\circ (D_x ; H ) 
                = \lim_{\ell \to \infty} \left \langle \nabla (f \circ d)(Y_\ell), H \right \rangle.       
        \end{equation}
        Also, we know by continuity of symplectic eigenvalues that $\lim_{\ell \to \infty} d(Y_\ell) = x$.
        By Williamson's Theorem, there exists $ M_\ell \in \mathds{SP}(2n) $ such that
        \begin{equation*}
               M_\ell^\top Y_\ell M_\ell = D_{d(Y_\ell)}.
        \end{equation*}
       Using the above relation and the fact that the sequence $ \{d(Y_\ell)  \}_{\ell \in \mathds{N}}$ is bounded, it is straightforward to verify that the sequence $ \{M_\ell \}_{\ell \in \mathds{N}}$ is also bounded.
       Therefore, there exists a subsequence $ \{M_{\ell_q} \}_{q \in \mathds{N}}$ of $ \{M_\ell \}_{\ell \in \mathds{N}}$ which converges to some $U \in \mathds{SP}(2n)$. 
       We have $U \in \mathds{SP}^\uparrow(2n, D_x)$.
       Indeed,
          \begin{align*}
            U^\top D_x U
                & = \lim_{q \to \infty } M_{\ell_q}^\top Y_{\ell_q} M_{\ell_q} \\
                & = \lim_{q \to \infty } D_{d\left(Y_{\ell_q} \right)} \\
                &= D_x.
          \end{align*}
        Now, the identity \eqref{eq:clarke-der-f-limit-yk} and Theorem~\ref{thm:theorem_gradient} imply that 
        \begin{align}
             ( f \circ d)^\circ (D_x ; H ) 
            &= \lim_{q \to \infty} \left \langle \nabla \left(f \circ d \right)\left(Y_{\ell_q}\right), H \right \rangle \nonumber \\
            &= \lim_{q \to \infty} \left \langle \frac{1}{2} M_{\ell_q}\operatorname{Diag}_s \left(\nabla f\left(d\left(Y_{\ell_q}\right)\right)\right) M_{\ell_q}^\top , H \right \rangle \nonumber\\
            &= \frac{1}{2} \lim_{q \to \infty} \left \langle \nabla f\left(d\left(Y_{\ell_q}\right)\right), \operatorname{diag}_s \left(M_{\ell_q}^\top H M_{\ell_q} \right) \right \rangle \nonumber \\
            &= \frac{1}{2} \lim_{q \to \infty} \left \langle \nabla f\left(d\left(Y_{\ell_q}\right)\right), \operatorname{diag}_s \left(U^\top H U \right) \right \rangle \nonumber\\
            & \le \frac{1}{2} \limsup_{y \to x} \left \langle \nabla f(y), \operatorname{diag}_s \left(U^\top H U \right) \right \rangle \nonumber\\
            &= \frac{1}{2} f^\circ \left(x ; \operatorname{diag}_s \left(U^\top H U \right) \right) \label{eq:max_eq2}.
        \end{align}
        The equality in \eqref{eq:max_eq2} follows from the definition \eqref{eq: Directional derivative}.
        Thus, we have
         \begin{equation*}
             ( f \circ d)^\circ (D_x ; H ) \leq \frac{1}{2} \max \left\{f^\circ \left( x ; \operatorname{diag}_s U^\top  H U \right) : U \in \mathds{SP}^\uparrow(2n, D_x) \right\}.
         \end{equation*}    
       To prove the other side inequality, let $U \in \mathds{SP}^\uparrow(2n, D_x)$ be arbitrary. 
       Again, the development \eqref{eq: Directional derivative}--\eqref{eq:rademacher-limit-clarke-derivative} implies that there exists a sequence $\{x^{(\ell)} \}_{\ell \in \mathds{N}} $ in $\mathds{R}^n_{++}$ converging to $x$ such that $f$ is Fr\'echet differentiable at each $x^{(\ell)}$ and 
         \begin{align}
             \frac{1}{2} f^\circ \left( x ; \operatorname{diag}_s \left( U^\top H U \right) \right) 
                &=\frac{1}{2} \lim_{\ell \to \infty} \left \langle \nabla f \left(x^{(\ell)} \right), \operatorname{diag}_s \left(U^\top H U \right) \right \rangle \nonumber\\
                &= \frac{1}{2} \lim_{\ell \to \infty} \left \langle \operatorname{Diag}_s \left(\nabla f \left(x^{(\ell)} \right)\right), U^\top H U \right \rangle \nonumber\\
                &= \lim_{\ell \to \infty} \left \langle \frac{1}{2}  D_{\nabla f(x^{(\ell)})}, U^\top H U \right \rangle \nonumber \\
                &= \lim_{\ell \to \infty} \left \langle \nabla (f \circ d)(D_{x^{(\ell)}}), U^\top H U \right \rangle \label{eq:justifying the equality of nabla} \\
                & \le \limsup_{Y \to  D_x} \left \langle \nabla (f \circ d) (Y), U^\top H U \right \rangle \nonumber \\
                &= (f \circ d)^\circ \left( D_x ; U^\top H U \right) \nonumber\\
                &= (f \circ d)^\circ \left( U^\top D_x U ; U^\top H U \right) \nonumber\\
                &= (f \circ d)^\circ \left( D_x ; H \right). \label{eq:justifying the equality of same direction}
         \end{align}
        The equality \eqref{eq:justifying the equality of nabla} follows from Corollary~\ref{cor:frechet-gradient-without-ordering} and \eqref{eq:justifying the equality of same direction} uses Lemma~\ref{thm:first lemma of section 5}. 
        Therefore, we get
         \begin{equation*}
             \frac{1}{2} \max \left\{f^\circ \left( x ; \operatorname{diag}_s U^\top  H U \right) : U \in \mathds{SP}^\uparrow(2n, D_x) \right\}\leq (f \circ d)^\circ \left( D_x ; H \right).
         \end{equation*}
        This concludes the proof.
    \end{proof}
    
    We note an identity stated in the following result which may be interesting in its own right.
    \begin{boxed}{white}
     \begin{theorem}\label{thm:gradient-symp-func-dx}
     We have
     \begin{equation} \label{thm:convex hull of second set}
         \partial (f \circ d)(D_x) = \mathscr{E}(f, x).
     \end{equation}
     \end{theorem}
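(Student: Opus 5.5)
Both sides of \eqref{thm:convex hull of second set} are non-empty compact convex subsets of $\s(2n)$. For $\partial (f\circ d)(D_x)$ this is the general fact recalled in Section~\ref{non-smooth analysis}; for $\mathscr{E}(f,x)$ it is Proposition~\ref{thm:second set convex}. It therefore suffices to compare support functions: we show that for every $H \in \s(2n)$,
\begin{equation*}
    \sigma_{\mathscr{E}(f,x)}(H) = (f\circ d)^\circ(D_x; H),
\end{equation*}
since the right-hand side is the support function of $\partial(f\circ d)(D_x)$. The plan is to compute $\sigma_{\mathscr{E}(f,x)}(H)$ directly from the definition \eqref{eq:ex-def} and match it with the formula of Theorem~\ref{th:fd-clark-f-clarke}.

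Fix $H$. For $U \in \mathds{SP}^\uparrow(2n, D_x)$ and $\alpha \in \partial f(x)$ we compute, using the adjointness of $\Diag_s$ and $\diag_s$,
\begin{equation*}
    \left\langle \tfrac12 U D_\alpha U^\top, H\right\rangle = \tfrac12 \left\langle D_\alpha, U^\top H U\right\rangle = \tfrac12 \left\langle \alpha, \diag_s(U^\top H U)\right\rangle .
\end{equation*}
Taking the supremum first over $\alpha\in\partial f(x)$ gives $\frac12 \sigma_{\partial f(x)}(\diag_s(U^\top H U))$. Since $f^\circ(x;\cdot)$ is the support function of $\partial f(x)$, this equals $\frac12 f^\circ(x; \diag_s(U^\top H U))$. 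Taking the supremum next over $U$ yields
\begin{equation*}
    \sigma_{\mathscr{E}(f,x)}(H) = \tfrac12 \sup_{U \in \mathds{SP}^\uparrow(2n, D_x)} f^\circ\left(x; \diag_s(U^\top H U)\right).
\end{equation*}
The supremum is attained: $\mathds{SP}^\uparrow(2n,D_x)$ is compact by \eqref{eq:sp-2na-char}, being a product of orthosymplectic groups, and $U\mapsto f^\circ(x;\diag_s(U^\top HU))$ is continuous, because $f^\circ(x;\cdot)$ is Lipschitz as a finite support function of a compact set. By Theorem~\ref{th:fd-clark-f-clarke} the right-hand side is exactly $(f\circ d)^\circ(D_x;H)$.

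So the support functions agree for all $H$, and the equality of closed convex sets follows. The only real obstacle would be convexity and closedness of $\mathscr{E}(f,x)$. Without it, the support-function argument would only give $\partial(f\circ d)(D_x) = \overline{\conv}\,\mathscr{E}(f,x)$. That obstacle is already handled by Proposition~\ref{thm:second set convex}, so what remains is the routine verification of the inner-product identity and the attainment of the maximum.
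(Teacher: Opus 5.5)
Your proposal is correct and follows the paper's proof essentially step for step: both reduce the equality of compact convex sets to equality of support functions, rewrite $\langle \tfrac12 U D_\alpha U^\top, H\rangle$ as $\tfrac12\langle \alpha, \operatorname{diag}_s(U^\top H U)\rangle$, maximize over $\alpha$ to obtain $f^\circ$, and then invoke Theorem~\ref{th:fd-clark-f-clarke}. You also check that the maximum over $U$ is attained, using compactness of the block orthosymplectic group and Lipschitz continuity of $f^\circ(x;\cdot)$; the paper leaves this point implicit.
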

    \end{boxed}
  \begin{proof}
        We know that $ \partial (f \circ d)(D_x)$ is a compact convex set, and so is $\mathscr{E}(f, x)$ by Proposition~\ref{thm:second set convex}.
        Therefore, showing \eqref{thm:convex hull of second set} is equivalent to showing that the support functions of $ \partial (f \circ d)(D_x)$ and $\mathscr{E}(f, x)$ are equal. 
        Consider the support function of $\mathscr{E}(f, x)$ evaluated at $H \in \mathds{S}(2n)$:
        \begin{align}
            \sigma_{\mathscr{E}(f, x)} (H)
                &= \max_{X \in \mathscr{E}(f, x)}  \left \langle H, X \right \rangle \nonumber \\
                &= \max_{\substack{U \in \mathds{SP}^\uparrow(2n, D_x) \\ \alpha \in \partial f(x)}} \left \langle H, \frac{1}{2} U D_\alpha U^\top \right \rangle \nonumber \\
                &= \frac{1}{2} \max_{\substack{U \in \mathds{SP}^\uparrow(2n, D_x) \\ \alpha \in \partial f(x)}} \left \langle U^\top H U, D_\alpha \right \rangle \nonumber \\
                &= \frac{1}{2} \max_{\substack{U \in \mathds{SP}^\uparrow(2n, D_x) \\ \alpha \in \partial f(x)}} \left \langle \operatorname{diag}_s \left(U^\top H U \right), \alpha \right \rangle \label{eq:sigma-efx-sigma-partial-fd-1} \\
                &= \frac{1}{2} \max_{U \in \mathds{SP}^\uparrow(2n, D_x)} \max_{\alpha \in \partial f(x)}  \left \langle \alpha, \operatorname{diag}_s \left(U^\top H U \right) \right \rangle \nonumber \\
                &= \frac{1}{2} \max_{U \in \mathds{SP}^\uparrow(2n, D_x)} f^\circ \left(x; \operatorname{diag}_s \left(U^\top H U \right) \right) \label{eq:sigma-efx-sigma-partial-fd-3}\\
                &= \left( f \circ d \right)^\circ \left( D_x ; H \right) \label{eq:sigma-efx-sigma-partial-fd-4} \\
               &= \sigma_{\partial (f \circ d) (D_x)} (H) \label{eq:sigma-efx-sigma-partial-fd-5}.
        \end{align}
        The equality \eqref{eq:sigma-efx-sigma-partial-fd-1} is due to the relation $\langle K, D_y \rangle = \langle \diag_s(K), y \rangle$ for all $K \in \mathds{S}(2n)$ and $y \in \mathds{R}^n$; \eqref{eq:sigma-efx-sigma-partial-fd-3} holds because $f^\circ \left(x; \cdot \right)$ is the support function of $\partial f(x)$; \eqref{eq:sigma-efx-sigma-partial-fd-4} follows from Theorem~\ref{th:fd-clark-f-clarke}, and \eqref{eq:sigma-efx-sigma-partial-fd-5} holds again because $ \left( f \circ d \right)^\circ \left( D_x ; \cdot \right)$ is the support function of $\partial (f \circ d) (D_x)$.
    \end{proof}

    We now present the main result of the section.
    \begin{boxed}{white}
        \begin{theorem} \label{th:generalized gradient}
            Let $A \in \mathds{P}(2n)$ such that $f$ is locally Lipschitz near $d(A)$. 
            Then the Clarke generalized gradient of $f \circ d$ at $A$ is given by
            \begin{equation}
                   \partial ( f \circ d)(A) = \left\{\frac{1}{2} M D_y M^\top : y \in \partial f(d(A)), M \in \mathds{SP}^\uparrow(2n, A) \right\}. 
            \end{equation}
        \end{theorem}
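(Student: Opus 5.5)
We reduce to the diagonal case, Theorem~\ref{thm:gradient-symp-func-dx}, by a congruence change of variables. Put $x \coloneqq d(A)$, which satisfies $x = x^\uparrow$, and fix $M \in \mathds{SP}^\uparrow(2n, A)$, so that $M^\top A M = D_x$. Since $f$ is locally Lipschitz near $x$, the standing hypotheses of this section hold at $x$, and $f\circ d$ is locally Lipschitz near both $A$ and $D_x$. Consequently $\partial(f\circ d)(A)$ and $\partial(f\circ d)(D_x)$ are nonempty compact convex sets, each determined by its support function.

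\textbf{Step 1 (transport of the directional derivative).} Lemma~\ref{thm:first lemma of section 5}, applied with this $M$, gives for every $H \in \s(2n)$
\begin{align*}
  (f\circ d)^\circ(A;H) = (f\circ d)^\circ\left(D_x; M^\top H M\right).
\end{align*}
Both sides are support functions, so
\begin{align*}
  \sigma_{\partial(f\circ d)(A)}(H) = \sigma_{\partial(f\circ d)(D_x)}\left(M^\top H M\right) = \sup_{G \in \partial(f\circ d)(D_x)} \left\langle M G M^\top, H\right\rangle = \sigma_{M \partial(f\circ d)(D_x) M^\top}(H).
\end{align*}
The set $M\,\partial(f\circ d)(D_x)\,M^\top$ is the image of a compact convex set under the linear map $G \mapsto MGM^\top$, hence compact and convex. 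Equality of support functions therefore yields
\begin{align*}
  \partial(f\circ d)(A) = M\,\partial(f\circ d)(D_x)\,M^\top.
\end{align*}

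\textbf{Step 2 (the diagonal case).} By Theorem~\ref{thm:gradient-symp-func-dx},
\begin{align*}
  \partial(f\circ d)(D_x) = \mathscr{E}(f,x) = \left\{\tfrac12 U D_\alpha U^\top : U \in \mathds{SP}^\uparrow(2n,D_x),\ \alpha \in \partial f(x)\right\}.
\end{align*}
Hence
\begin{align*}
  \partial(f\circ d)(A) = \left\{\tfrac12 (MU) D_\alpha (MU)^\top : U \in \mathds{SP}^\uparrow(2n,D_x),\ \alpha \in \partial f(x)\right\}.
\end{align*}

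\textbf{Step 3 (identification of the index set).} By \eqref{eq:sp-2na-char-diagonal-sense}, as $U$ ranges over $\mathds{SP}^\uparrow(2n,D_{d(A)})$ the product $MU$ ranges over exactly $\mathds{SP}^\uparrow(2n,A)$. Since $x = d(A)$, the last display is precisely the claimed set, and in particular the description does not depend on the choice of $M$.

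The only delicate point is Step 1. Taking images through the congruence requires the image set to be closed and convex, which follows from linearity and compactness. One must also check that Lemma~\ref{thm:first lemma of section 5} applies with $M^\top A M = D_x$, which is a diagonal matrix with ordered entries, so that Theorem~\ref{thm:gradient-symp-func-dx}, stated for $x=x^\uparrow$, can be invoked. Both conditions hold by the choice of $M \in \mathds{SP}^\uparrow(2n, A)$, so the main difficulty is already contained in Theorems~\ref{th:fd-clark-f-clarke} and~\ref{thm:gradient-symp-func-dx}.
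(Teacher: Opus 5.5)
Your proposal is correct and follows the paper's proof: you reduce by the congruence $B \mapsto M^\top B M$ to the diagonal case, invoke Theorem~\ref{thm:gradient-symp-func-dx}, and then use \eqref{eq:sp-2na-char-diagonal-sense} to identify $\{MU\}$ with $\mathds{SP}^\uparrow(2n,A)$. The only difference is how you get $\partial(f\circ d)(A) = M\,\partial(f\circ d)(D_{d(A)})\,M^\top$: the paper uses Clarke's chain rule (Theorem~2.3.10 of \cite{Clarke}) for the surjective linear map $\psi$, whereas you use Lemma~\ref{thm:first lemma of section 5} plus a support-function comparison, which is equally valid and keeps the argument within the paper.
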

    \end{boxed}
    \begin{proof}
        Let $N \in \mathds{SP}^\uparrow(2n, A)$ be arbitrary. 
        Consider the map $\psi: \mathds{P}(2n) \to \mathds{P}(2n)$ given by $\psi(B)=N^\top B N$.
        Since $\psi$ is a locally linear map, it is strictly Fr\'echet differentiable on $\mathds{P}(2n)$.
        Also, we have $D\psi(B)=\psi$ for all $B \in \mathds{P}(2n)$.
        By applying Theorem~2.3.10 of \cite{Clarke} to the composition  $f \circ d  = (f \circ d) \circ \psi$, and using the fact that $D\psi(A)=\psi$ is an onto map, we get that
        \begin{align*}
            \partial (f \circ d)(A) 
                &= D\psi(A)^\top \left(\partial (f \circ d)(\psi(A)) \right) \\
                &= \left\{\psi^\top(Y): Y \in  \partial (f \circ d)(\psi(A)) \right\}\\
                &= \left\{N Y N^\top : Y \in  \partial (f \circ d)(D_{d(A)}) \right\},
        \end{align*}
        where we used the fact that $\psi(A)=D_{d(A)}$.
        By Theorem~\ref{thm:gradient-symp-func-dx} it thus follows that
        \begin{align*}
            \partial (f \circ d)(A) 
                &= \left\{N Y N^\top : Y \in  \mathscr{E}(f, d(A)) \right\}.
        \end{align*} 
        Using the definition \eqref{eq:ex-def}, we thus get
        \begin{align*}
            \partial (f \circ d)(A) 
                &= \left\{\frac{1}{2} (NU) D_y (NU)^\top : y \in \partial f(d(A)), \ U \in \mathds{SP}^\uparrow(2n, D_{d(A)}) \right\}.
        \end{align*}
        We know from \eqref{eq:sp-2na-char-diagonal-sense} that $\mathds{SP}^\uparrow(2n, A)$ is precisely the $\mathds{SP}^\uparrow(2n, D_{d(A)})$-orbit of $N$, so we have
        \begin{align*}
            \partial (f \circ d)(A) 
                &= \left\{\frac{1}{2} M D_y M^\top : y \in \partial f(d(A)), \ M \in \mathds{SP}^\uparrow(2n, A) \right\},
        \end{align*}
        which concludes the proof.
    \end{proof}

\subsection{Strict Fr\'echet differentiability}\label{sec:symp-spec-func-strict}
    We begin by recalling two known results about strict Fr\'echet differentiability that will play key roles in the proof of the main result.
    The following known result provides a relationship between strict Fr\'echet differentiability and Clarke generalized gradient.
    \begin{boxed}{white}
      \begin{lemma}[Prop.~2.2.4, \cite{Clarke}] \label{th:singleton_set_strictly_diff}
         Let $\phi$ be a real-valued map on an open subset $\mathscr{O}$ of $\mathds{R}^n$ and $\xi \in \mathscr{O}$ be fixed.
         If $f$ is strictly Fr\'echet differentiable at $\xi$ then $f$ is locally Lipschitz near $\xi$ and $ \partial \phi(\xi) = \left\{\nabla \phi(\xi) \right\}$.
         Conversely, if $f$ is locally Lipschitz near $\xi$ and $ \partial \phi(\xi)$ is a singleton set then $f$ is strictly Fr\'echet differentiable at $\xi$ and $ \partial \phi(\xi)=\{\nabla \phi(\xi)\}$. 
      \end{lemma}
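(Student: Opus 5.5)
This is Clarke's Proposition~2.2.4, cited rather than proved, so we only need to outline the standard argument in finite dimensions. (In the statement the letter $f$ stands for $\phi$.) We treat the two directions separately.

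\emph{Strict differentiability implies the Lipschitz property and a singleton gradient.} Put $L := \nabla\phi(\xi)$. By strict differentiability with $\varepsilon = 1$, there is $\delta>0$ such that for $y \in B(\xi,\delta)$ and $\|u\|<\delta$,
\[
|\phi(y+u)-\phi(y)-\langle L,u\rangle| \le \|u\| .
\]
Now take $y,z \in B(\xi,\delta/2)$ and set $u = z-y$, so $\|u\|<\delta$. Then
\[
|\phi(z)-\phi(y)| \le (\|L\|+1)\|z-y\|,
\]
which is the local Lipschitz property. Next, substitute $u = tv$ in the strict limit:
\[
\frac{\phi(y+tv)-\phi(y)}{t} \to \langle L, v\rangle \quad \text{as } y\to\xi,\ t\searrow 0 .
\]
Hence the limsup equals the limit, and $\phi^\circ(\xi;v) = \langle L,v\rangle$ is linear in $v$. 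The only compact convex set whose support function is linear is a single point, so $\partial\phi(\xi) = \{L\}$.

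\emph{Lipschitz plus singleton gradient implies strict differentiability.} Suppose $\phi$ is locally Lipschitz near $\xi$ with constant $K$, and $\partial\phi(\xi) = \{\zeta\}$. Then $\phi^\circ(\xi;v) = \langle\zeta,v\rangle$ for every $v$. Applying this to $-v$ and using $\phi^\circ(\xi;-v) = (-\phi)^\circ(\xi;v)$, we get
\[
\liminf_{y\to\xi,\ t\searrow 0} \frac{\phi(y+tv)-\phi(y)}{t} = \langle\zeta,v\rangle .
\]
Together with the limsup this gives a genuine limit, i.e.\ strict Hadamard-type differentiability along each fixed direction $v$.

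The main obstacle is that this convergence must be uniform over unit directions. We handle it with a compactness argument, using that the difference quotient is $K$-Lipschitz in $v$:
\[
\left|\frac{\phi(y+tv)-\phi(y+tw)}{t}\right| \le K\|v-w\| .
\]
Cover the unit sphere by a finite $\varepsilon$-net $\{v_1,\dots,v_N\}$. Choose $y,t$ close enough to $\xi,0$ that each quotient along $v_i$ is within $\varepsilon$ of $\langle\zeta,v_i\rangle$. Then for every unit $v$ the quotient is within $(2K+\|\zeta\|+1)\varepsilon$ of $\langle\zeta,v\rangle$. Finally write a general $u$ as $u = tv$ with $t = \|u\|$ and $\|v\| = 1$. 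This yields
\[
\frac{|\phi(y+u)-\phi(y)-\langle\zeta,u\rangle|}{\|u\|} \to 0 ,
\]
which is strict Fréchet differentiability with $\nabla\phi(\xi) = \zeta$. Taking $y = \xi$ also gives ordinary Fréchet differentiability.
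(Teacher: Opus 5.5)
Your proof is correct and is essentially the standard proof of Clarke's Prop.~2.2.4, which the paper only cites without proof. In that proof, strict differentiability gives $\phi^\circ(\xi;v)=\langle\nabla\phi(\xi),v\rangle$. Conversely, a singleton $\partial\phi(\xi)$ together with $\phi^\circ(\xi;-v)=(-\phi)^\circ(\xi;v)$ gives the strict limit along each fixed direction. Your finite-net step, which uses that the difference quotients are $K$-Lipschitz in the direction, is exactly Clarke's remark that uniformity over compact sets of directions is automatic for Lipschitz functions; in $\mathds{R}^n$ it converts Clarke's notion of strict differentiability into the strict Fr\'echet notion used in the paper.
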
 
      \end{boxed}
    Continuity of G\^ateaux differentiability at a point implies strict Fr\'echet differentiability at that point, as stated in the following known result.
    \begin{boxed}{white}
      \begin{lemma}[p.~32, \cite{Clarke}] \label{th:continuously_diff_strictly_diff}
         Let $\phi$ be a real-valued map on an open subset $\mathscr{O}$ of $\mathds{R}^n$ and $\xi \in \mathscr{O}$ be fixed.
        If $\phi$ is continuously G\^ateaux differentiable at $\xi$ then it is strictly Fr\'echet differentiable at $\xi$ and hence locally Lipschitz near $\xi$.
      \end{lemma}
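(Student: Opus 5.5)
The statement just above is Lemma~\ref{th:continuously_diff_strictly_diff}, a standard fact from \cite{Clarke}. I would prove it directly via the mean value theorem along line segments.

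\textbf{Setup.} Suppose $\phi$ is G\^ateaux differentiable on an open ball $\mathscr{B} \subset \mathscr{O}$ around $\xi$, and that $\eta \mapsto \nabla\phi(\eta)$ is continuous at $\xi$. Let $\varepsilon>0$ be given. By continuity, shrink the ball so that $\|\nabla \phi(\eta)-\nabla\phi(\xi)\| \le \varepsilon$ for all $\eta \in \mathscr{B}$.

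\textbf{Key estimate.} Take $y, y+u \in \mathscr{B}$ and set $g(t) \coloneqq \phi(y+tu)$ for $t\in[0,1]$. Since $\mathscr{B}$ is convex, every point $y+tu$ lies in $\mathscr{B}$. G\^ateaux differentiability gives one-sided derivatives $g'_+(t)=\langle \nabla\phi(y+tu),u\rangle$. I expect the existence of a two-sided derivative to be the main subtlety. It is handled as follows. The left derivative is $\lim_{s\searrow0}\big(\phi(y+tu-su)-\phi(y+tu)\big)/(-s) = -\langle\nabla\phi(y+tu),-u\rangle$, which equals $g'_+(t)$ by linearity of the G\^ateaux derivative. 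Hence $g$ is differentiable on $[0,1]$, and by the classical mean value theorem there is some $\tau\in(0,1)$ with
\begin{equation*}
\phi(y+u)-\phi(y)=\langle\nabla\phi(y+\tau u),u\rangle.
\end{equation*}

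\textbf{Conclusion.} Combining the key estimate with the choice of ball gives
\begin{equation*}
|\phi(y+u)-\phi(y)-\langle\nabla\phi(\xi),u\rangle|\le\varepsilon\|u\|
\end{equation*}
whenever $y$ and $y+u$ lie in $\mathscr{B}$. This covers all $y$ sufficiently close to $\xi$ and $\|u\|$ sufficiently small. This is precisely strict Fr\'echet differentiability at $\xi$ with $D\phi(\xi)=\langle\nabla\phi(\xi),\cdot\rangle$. Taking $y=\xi$ recovers ordinary Fr\'echet differentiability.

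For local Lipschitzness, the same estimate gives $|\phi(y+u)-\phi(y)|\le(\|\nabla\phi(\xi)\|+\varepsilon)\|u\|$ on the ball. So $\phi$ is Lipschitz near $\xi$, which is also consistent with the first part of Lemma~\ref{th:singleton_set_strictly_diff}.
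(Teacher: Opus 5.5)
Your proof is correct and complete. The paper gives no argument of its own and simply cites Clarke, and your mean-value-theorem proof (writing $\phi(y+u)-\phi(y)=\langle\nabla\phi(y+\tau u),u\rangle$ on a convex ball where $\|\nabla\phi-\nabla\phi(\xi)\|\le\varepsilon$) is the standard argument behind that fact. Your check that $t\mapsto\phi(y+tu)$ has a two-sided derivative, via linearity of the G\^ateaux derivative applied to the direction $-u$, is exactly the detail needed to invoke the one-variable mean value theorem, and fixing one $\varepsilon$ on one ball correctly gives local Lipschitzness.
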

    \end{boxed}
    We now present the main result of the section that provides necessary and sufficient conditions for strict Fr\'echet differentiability of a symplectic spectral function.
    \begin{boxed}{white}
        \begin{theorem} \label{th:strict_differentiability}
            Let $ A \in \mathds{P}(2n) $ and $ f : \mathds{R}_{++}^n \rightarrow \mathds{R} $ be symmetric.
            Then $ f \circ d $ is strictly Fr\'echet differentiable at $ A $ if and only if $f$ is strictly Fr\'echet differentiable at $ d(A)$.
        \end{theorem}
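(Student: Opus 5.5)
We reduce both directions to the singleton criterion of Lemma~\ref{th:singleton_set_strictly_diff} and the Clarke gradient formula of Theorem~\ref{th:generalized gradient}.

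\emph{($\Leftarrow$)} Suppose $f$ is strictly Fr\'echet differentiable at $x \coloneqq d(A)$.
\begin{enumerate}
\item By Lemma~\ref{th:singleton_set_strictly_diff}, $f$ is locally Lipschitz near $d(A)$ and $\partial f(d(A)) = \{\nabla f(d(A))\}$.
\item Since $d$ is locally Lipschitz \cite[Th.~7]{Onsymplecticeigenvalue}, $f\circ d$ is locally Lipschitz near $A$.
\item Theorem~\ref{th:generalized gradient} then gives
\begin{align*}
\partial (f\circ d)(A)=\left\{\tfrac12 M D_{\nabla f(d(A))}M^\top : M\in \mathds{SP}^\uparrow(2n,A)\right\}.
\end{align*}
\item Strict differentiability implies Fr\'echet differentiability, so Lemma~\ref{lemma2} applies: $d(A)$ block-refines $\nabla f(d(A))$.
\item By the argument at the end of the proof of Theorem~\ref{thm:theorem_gradient}, this block-refinement makes $\frac12 M D_{\nabla f(d(A))}M^\top$ independent of $M\in\mathds{SP}^\uparrow(2n,A)$. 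Hence $\partial(f\circ d)(A)$ is a singleton.
\item The converse part of Lemma~\ref{th:singleton_set_strictly_diff} yields strict Fr\'echet differentiability of $f\circ d$ at $A$.
\end{enumerate}

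\emph{($\Rightarrow$)} Suppose $f\circ d$ is strictly Fr\'echet differentiable at $A$. Fix $M\in \mathds{SP}^\uparrow(2n,A)$ and set $\phi(y)\coloneqq M^{-\top}D_yM^{-1}$, as in the proof of Theorem~\ref{thm:theorem_gradient}. Then $f=(f\circ d)\circ\phi$ near $d(A)$, with $\phi(d(A))=A$.

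The map $\phi$ is affine (indeed linear) in $y$, so it is strictly Fr\'echet differentiable. A composition of strictly differentiable maps is strictly differentiable. We can also verify this directly. Let $g \coloneqq f\circ d$, $G \coloneqq \nabla g(A)$, and let $\|\cdot\|$ be the operator norm of the linear map $u\mapsto M^{-\top}D_uM^{-1}$. For $y\to d(A)$ and $u\to 0$,
\begin{align*}
\frac{|f(y+u)-f(y)-\langle G, M^{-\top}D_uM^{-1}\rangle|}{\|u\|}
=\frac{|g(\phi(y)+\phi(u))-g(\phi(y))-\langle G,\phi(u)\rangle|}{\|\phi(u)\|}\cdot\frac{\|\phi(u)\|}{\|u\|}.
\end{align*}
Here $\phi(y)\to A$ and $\phi(u)\to0$, so the first factor tends to $0$ by strict differentiability of $g$ at $A$. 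The second factor is bounded by the operator norm. Since $\phi$ is injective, we may assume $\phi(u)\neq 0$ whenever $u\ne0$. Hence $f$ is strictly Fr\'echet differentiable at $d(A)$.

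An alternative for this direction is to use Lemma~\ref{th:singleton_set_strictly_diff} to get that $f\circ d$ is locally Lipschitz with singleton Clarke gradient. The composition $f=(f\circ d)\circ \phi$ would then make $f$ locally Lipschitz near $d(A)$. From there, Theorem~\ref{th:generalized gradient} with the identity choice gives $\{\frac12 MD_yM^\top: y\in\partial f(d(A))\}$ inside a singleton. Injectivity of $y\mapsto MD_yM^\top$ then forces $\partial f(d(A))$ to be a singleton.

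The main obstacle is the ($\Leftarrow$) direction, specifically confirming that $\partial(f\circ d)(A)$ collapses to a single matrix even though $M$ ranges over the non-compact-looking set $\mathds{SP}^\uparrow(2n,A)$. This is exactly where the block-refinement from Lemma~\ref{lemma2} and the commutation of $U=\bigboxplus_j U_j$ with $D_{\nabla f(d(A))}$ are needed.
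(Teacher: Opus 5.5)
Your proof is correct. The ($\Leftarrow$) direction is the paper's argument: Lemma~\ref{th:singleton_set_strictly_diff} gives local Lipschitzness and a singleton $\partial f(d(A))$, Theorem~\ref{th:generalized gradient} then describes $\partial(f\circ d)(A)$, the block-refinement/commutation argument behind Theorem~\ref{thm:theorem_gradient} collapses that set to one matrix, and the converse half of Lemma~\ref{th:singleton_set_strictly_diff} finishes the direction.

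For ($\Rightarrow$), your main route is genuinely different. The paper works with Clarke gradients. Lemma~\ref{th:singleton_set_strictly_diff} makes $\partial(f\circ d)(A)$ a singleton. Theorem~\ref{th:generalized gradient} writes this set as $\{\frac12 MD_yM^\top\}$. Comparing with the gradient formula of Theorem~\ref{thm:theorem_gradient} then forces every $y\in\partial f(d(A))$ to equal $\nabla f(d(A))$. You instead check the definition of strict differentiability for $f=(f\circ d)\circ\phi$ directly. This uses only that $\phi(y)=M^{-\top}D_yM^{-1}$ is an injective linear map with $\phi(d(A))=A$, and that symplectic congruence preserves $d$.

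Your argument is more elementary and self-contained. It needs none of the Clarke-gradient machinery, so it avoids Theorem~\ref{th:generalized gradient} and the Rademacher and compactness arguments behind Theorem~\ref{th:fd-clark-f-clarke}. It is exactly the strict analogue of the paper's own ``only if'' step in Theorem~\ref{thm:theorem_gradient}, and it yields the gradient explicitly as $\nabla f(d(A))=\operatorname{diag}_s\bigl(M^{-1}\nabla(f\circ d)(A)M^{-\top}\bigr)$. The paper's route has the advantage of treating both directions uniformly through Theorem~\ref{th:generalized gradient}.

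Your sketched alternative is essentially the paper's proof, with one small simplification. Injectivity of $y\mapsto MD_yM^\top$ replaces the appeal to Theorem~\ref{thm:theorem_gradient}. This works for any fixed $M\in\mathds{SP}^\uparrow(2n,A)$; ``the identity'' is only available when $A$ is already of the form $D_{d(A)}$.
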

    \end{boxed}
    \begin{proof}
        Suppose $ f $ is strictly Fr\'echet differentiable at $ d(A)$.
        We know by Lemma~\ref{th:singleton_set_strictly_diff} that $f$ is locally Lipschitz near $d(A)$ and 
                  \begin{equation} \label{eq:partial-f-da-singleton}
                      \partial f(d(A)) = \left\{\nabla f(d(A))\right\}.
                  \end{equation}
        Now, $f$ being locally Lipschitz near $d(A)$ implies that $f \circ d$ is locally Lipschitz near $A$.
        By Theorem~\ref{th:generalized gradient} and the identity \eqref{eq:partial-f-da-singleton}, we have
             \begin{equation*}
                 \partial (f \circ d) (A) = \frac{1}{2}\left\{ M D_{\nabla f(d(A))} M^\top: M \in \mathds{SP}^\uparrow(2n, A) \right\}. 
             \end{equation*}
        Theorem~\ref{thm:theorem_gradient} asserts that the right-hand side of the above equation is a singleton set given by $\nabla (f \circ d)(A)$.
        By another application of Lemma~\ref{th:singleton_set_strictly_diff}, we conclude that
        $f \circ d$ is strictly Fr\'echet differentiable at $ A $.
        
        Conversely, suppose that $ f\circ d $ is strictly Fr\'echet differentiable at $ A $.
        \sloppy
        Lemma~\ref{th:singleton_set_strictly_diff} implies that $f \circ d$ is locally Lipschitz near $A$, which also implies that $f$ is locally Lipschitz near $d(A)$.
        Furthermore, we have
        \begin{align}\label{eq:partial-fda-nabla-fda}
            \partial (f \circ d)(A) = \left\{ \nabla (f \circ d)(A) \right\}.
        \end{align}
        In light of Lemma~\ref{th:singleton_set_strictly_diff}, it suffices to show that $\partial f(d(A))$ is a singleton set.
        Now, Theorem~\ref{th:generalized gradient} gives
        \begin{align}\label{eq:partial-fda-fda-gradient}
            \partial ( f \circ d)(A) = \left\{\frac{1}{2} M D_y M^\top : y \in \partial f(d(A)), M \in \mathds{SP}^\uparrow(2n, A) \right\}. 
        \end{align}
        Let $y \in \partial f(d(A))$ and $M \in \mathds{SP}^\uparrow(2n,A)$.
        The relations \eqref{eq:partial-fda-nabla-fda} and \eqref{eq:partial-fda-fda-gradient} imply that
        \begin{equation} \label{eq:strict_first}
            \nabla (f \circ d)(A) = \frac{1}{2} M D_{y} M^\top.          
        \end{equation}   
        We also have by Theorem~\ref{thm:theorem_gradient} that 
             \begin{equation} \label{eq:strict_second}
                \nabla (f \circ d)(A) = \frac{1}{2} M D_{\nabla f(d(A))} M^\top.  
             \end{equation}
        By comparing \eqref{eq:strict_first} with \eqref{eq:strict_second}, we thus get $y=\nabla f(d(A))$, which implies $\partial f(d(A))=\left\{ \nabla f(d(A)) \right\}$.
        This completes the proof.     
    \end{proof}

    As a corollary to Theorem~\ref{thm:theorem_gradient}, we now show that symplectic spectral functions exhibit similar property for continuously G\^ateaux differentiability as for Fr\'echet and strictly Fr\'echet differentiabilities.
    \begin{boxed}{white}
        \begin{corollary} \label{th:cont.diff}
             Let $ f : \mathds{R}_{++}^n \rightarrow \mathds{R} $ be a symmetric map and $A \in \mathds{P}(2n)$.
            Then $ f \circ d $ is continuously G\^ateaux differentiable at $ A $ if and only if $f$ is continuously G\^ateaux differentiable at $ d(A)$.
        \end{corollary}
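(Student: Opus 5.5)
Both directions will be derived from Theorem~\ref{thm:theorem_gradient}, Theorem~\ref{th:strict_differentiability} and Lemma~\ref{th:continuously_diff_strictly_diff}. The key intermediate fact is this: if $f$ (resp.\ $f\circ d$) is continuously G\^ateaux differentiable at a point, then by Lemma~\ref{th:continuously_diff_strictly_diff} it is locally Lipschitz near that point. A G\^ateaux differentiable locally Lipschitz map on a finite-dimensional space is Fr\'echet differentiable (\cite[p.~132]{Borwein}, as used in Proposition~\ref{Thm_sum fun diff}). So on a neighbourhood, G\^ateaux and Fr\'echet differentiability coincide, and Theorem~\ref{thm:theorem_gradient} applies pointwise.

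\emph{($\Leftarrow$)} Suppose $f$ is continuously G\^ateaux differentiable at $d(A)$. Then $f$ is Fr\'echet differentiable on some neighbourhood $\mathscr{O}$ of $d(A)$. Since $f$ is symmetric, we may take $\mathscr{O}$ to be permutation invariant. By continuity of $d$, there is a neighbourhood $\mathscr{N}$ of $A$ with $d(\mathscr{N})\subset\mathscr{O}$. Theorem~\ref{thm:theorem_gradient} then gives Fr\'echet (hence G\^ateaux) differentiability of $f\circ d$ at every $B\in\mathscr{N}$, with
\[
\nabla(f\circ d)(B)=\tfrac12 M_B D_{\nabla f(d(B))}M_B^\top,\qquad M_B\in\mathds{SP}^\uparrow(2n,B).
\]
For continuity at $A$, take $B_\ell\to A$. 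Suppose $\nabla(f\circ d)(B_\ell)\not\to\nabla(f\circ d)(A)$, and pass to a subsequence staying a fixed distance away. As in the proof of Theorem~\ref{th:fd-clark-f-clarke}, the matrices $M_{B_\ell}$ are bounded because $d(B_\ell)$ is bounded, so a further subsequence converges to some $M$. Passing to the limit in $M_{B_\ell}^\top B_\ell M_{B_\ell}=D_{d(B_\ell)}$ shows $M\in\mathds{SP}^\uparrow(2n,A)$. Since $\nabla f(d(B_\ell))\to\nabla f(d(A))$, the gradients converge to $\frac12 M D_{\nabla f(d(A))}M^\top$. By the independence-of-$M$ part of Theorem~\ref{thm:theorem_gradient}, this equals $\nabla(f\circ d)(A)$, a contradiction.

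The main obstacle is the boundedness of $M_{B_\ell}$, because the symplectic group is non-compact. I would argue it as follows. From $M^\top B M=D_{d(B)}$ we get $M^\top M\preceq \lambda_{\min}(B)^{-1}D_{d(B)}$ in the L\"owner order. Hence $\|M\|^2\le \operatorname{Tr}(D_{d(B)})/\lambda_{\min}(B)$, which is uniformly bounded for $B$ near $A$.

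\emph{($\Rightarrow$)} Suppose $f\circ d$ is continuously G\^ateaux differentiable at $A$. By Lemma~\ref{th:continuously_diff_strictly_diff} it is locally Lipschitz near $A$, so it is Fr\'echet differentiable on a neighbourhood $\mathscr{N}$ of $A$. Fix $M\in\mathds{SP}^\uparrow(2n,A)$ and set $\phi(y)=M^{-\top}D_yM^{-1}$, which is affine and continuous. For $y$ near $d(A)$ we have $\phi(y)\in\mathscr{N}$. Moreover $f=(f\circ d)\circ\phi$, because $d(\phi(y))=y^\uparrow$ and $f$ is symmetric. The chain rule then shows that $f$ is Fr\'echet differentiable near $d(A)$, with
\[
\nabla f(y)=\phi^\top\bigl(\nabla(f\circ d)(\phi(y))\bigr)=\operatorname{diag}_s\bigl(M^{-1}\nabla(f\circ d)(\phi(y))M^{-\top}\bigr).
\]
This is a composition of continuous maps of $y$, so $\nabla f$ is continuous at $d(A)$, which proves the claim.
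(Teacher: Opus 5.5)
Your proposal is correct and follows essentially the same route as the paper. For ($\Leftarrow$) you use local Lipschitzness plus the Borwein result to get Fr\'echet differentiability near $d(A)$, Theorem~\ref{thm:theorem_gradient} for the gradient formula, and a bounded-subsequence argument with independence of the choice of $M$ for continuity; for ($\Rightarrow$) you use the same map $\phi(y)=M^{-\top}D_yM^{-1}$ with a fixed $M$. The differences are minor improvements: your L\"owner-order bound $\|M_B\|^2\le \operatorname{Tr}(D_{d(B)})/\lambda_{\min}(B)$ makes explicit the boundedness the paper calls straightforward, and reading $\nabla f$ directly off the chain rule avoids the paper's permutation bookkeeping (Theorem~\ref{th:strict_differentiability}, which you cite, is never actually needed).
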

    \end{boxed}
    \begin{proof}
        Suppose $f$ is continuously G\^ateaux differentiable at $d(A)$.
        We show that $ f \circ d $ is  G\^ateaux differentiable in a neighborhood of $A$ and the gradient map $\nabla (f \circ d)(\cdot)$ is continuous at $A$.
        We know from Lemma~\ref{th:continuously_diff_strictly_diff} that $f$ is locally Lipschitz near $d(A)$. 
        Since locally Lipschitz G\^ateaux differentiable real-valued maps on Euclidean space are Fr\'echet differentiable \cite[p.~132]{Borwein}, $f$ is Fr\'echet differentiable in some neighborhood $\mathscr{O}_{d(A)} \subset \mathds{R}_{++}^n $ of $d(A)$.
        It thus follows from Theorem~\ref{thm:theorem_gradient} that $f \circ d$ is Fr\'echet differentiable and hence G\^ateaux differentiable in a neighborhood $\mathscr{O}_A \subset \mathds{P}(2n)$ of $A$.
        We now argue continuity of $\nabla (f \circ d)(\cdot)$ at $A$.
        Let $\{A_{\ell}\}_{\ell \in \mathds{N}}$ be any sequence in $\mathscr{O}_A $ that converges to $A$.
        In order to show that the sequence $\{ \nabla (f \circ d ) (A_{\ell} ) \}_{\ell \in \mathds{N}}$ converges to $  \nabla (f \circ d ) (A)$, we shall show that every subsequence of the sequence has a subsequence that converges to $  \nabla (f \circ d ) (A)$.
        Since the gradient map $\nabla f(\cdot)$ is continuous at $d(A)$ by the hypothesis, and $d(\cdot)$ and $\operatorname{Diag}_s(\cdot)$ are known to be continuous at $A$  and $\nabla f(d(A))$, respectively, we have that the sequence $\{ D_{\nabla f(d(A_{\ell}))} \}_{\ell \in \mathds{N}}$ converges to $D_{\nabla f(d(A))}$.
        We know from Theorem~\ref{thm:theorem_gradient} that 
                \begin{align} 
                     \nabla (f \circ d)(A) 
                        &= \frac{1}{2} M  D_{\nabla f(d(A))} M^\top , \label{eq:cont_Gateaux_eqn1} \\
                    \nabla (f \circ d) \left(A_{\ell} \right) 
                        &= \frac{1}{2} M_{\ell} D_{\nabla f\left(d\left(A_{\ell} \right)\right)}M_{\ell}^\top, \label{eq:cont_Gateaux_eqn2}   
                \end{align}
        for arbitrary $M \in \mathds{SP}^\uparrow(2n,A)$ and  $M_{\ell } \in \mathds{SP}^\uparrow(2n,A_{\ell})$.
        Let $  \{ \nabla (f \circ d ) (A_{\ell_q} ) \}_{q \in \mathds{N}} $ be any subsequence of $ \{ \nabla (f \circ d  ) (A_{\ell}  ) \}_{\ell \in \mathds{N}} $.
        By similar arguments as given in the proof of Theorem~\ref{th:fd-clark-f-clarke}, we know that $\{M_{\ell_q} \}_{q \in \mathds{N}}$ is a bounded sequence.
        So, there exists a convergent subsequence $\{M_{\ell_{q_p}} \}_{p \in \mathds{N}}$ of it converging to some $M \in \mathds{SP}(2n)$. 
        The continuity of symplectic eigenvalues ensures that $M \in \mathds{SP}^\uparrow(2n, A)$.
        \sloppy We can thus conclude from \eqref{eq:cont_Gateaux_eqn1} and \eqref{eq:cont_Gateaux_eqn2} that $ \{ \nabla (f \circ d ) (A_{\ell_{q_p}} ) \}_{p \in \mathds{N}}$ converges to $ \nabla (f \circ d)(A)$, which establishes continuity of $\nabla (f \circ d)(\cdot)$ at $A$.

        Conversely, suppose that $ f \circ d $ is continuously G\^ateaux differentiable at $A$.
        We show that $f$ is G\^ateaux differentiable in a neighborhood of $d(A)$ and the gradient map $\nabla f(\cdot ) $ is continuous at $d(A)$.
        We know from Lemma~\ref{th:continuously_diff_strictly_diff} that $ f \circ d $ is locally Lipschitz near $d(A)$. 
        So, again by \cite[p.~132]{Borwein} we know that $f \circ d $ is Fr\'echet differentiable in some neighborhood $\mathscr{O}_{A} \subset \mathds{P}(2n)$ of $A$.
        Theorem~\ref{thm:theorem_gradient} implies that $f$ is Fr\'echet differentiable and hence G\^ateaux differentiable in some neighborhood $\mathscr{O}_{d(A)} \subset \mathds{R}_{++}^n$ of $d(A)$.
        Let $\{\xi^{(\ell)}\}_{\ell \in \mathds{N}}$ be any sequence in $\mathscr{O}_{d(A)}$ converging to $d(A)$.
        Let $P \in \Pi(n)$ such that $P \xi^{(\ell)}={\xi^{(\ell)}}^\uparrow$ and let $M \in \mathds{SP}^\uparrow(2n,A)$ be fixed.
        Since $\nabla (f \circ d)(\cdot)$ is continuous at $A$, we have that 
        the sequence $ \{ \nabla (f \circ d) (M^{-\top} D_{\xi^{(\ell)}} M^{-1} ) \}_{\ell \in \mathds{N}}$ converges to $ \nabla (f \circ d)(A) $.
        We have by the chain-rule that $\nabla f(P\xi^{(\ell)}) = P \nabla f(\xi^{(\ell)})$ for all $\ell \in \mathds{N}$.
        Therefore, by Theorem~\ref{thm:theorem_gradient}
               \begin{align}
                     \nabla \left (f \circ d \right )\left (M^{-\top} D_{\xi^{(\ell)}} M^{-1} \right) 
                                &= \frac{1}{2} M \left(P^\top \oplus P^\top \right) D_{\nabla f\left( {{\xi^{(\ell)}}^\uparrow} \right)} \left (P \oplus P \right) M^\top \nonumber \\
                                &= \frac{1}{2} M D_{P^\top \nabla f\left( {{\xi^{(\ell)}}^\uparrow} \right)} M^\top \nonumber \\
                                &=\frac{1}{2} M D_{\nabla f\left( \xi^{(\ell)} \right) } M^\top, \label{eq:cont_Gateaux_eq3}
               \end{align}
        and 
               \begin{equation} \label{eq:cont_Gateaux_eqn4}
                   \nabla (f \circ d)(A) = \frac{1}{2} M D_{\nabla f(d(A))} M^\top.
               \end{equation}   
        It is now easy to see from \eqref{eq:cont_Gateaux_eq3} and \eqref{eq:cont_Gateaux_eqn4} that the sequence $\{\nabla f(\xi^{(\ell)} )\}_{\ell \in \mathds{N}} $ converges to $ \nabla f(d(A))$.
    \end{proof}
    
    The following example, inspired by the example given in \cite[p.~12]{SpectralfunctionLewis}, shows that $ f \circ d $ may not be G\^ateaux differentiable at $ A $ even if $ f $ is G\^ateaux differentiable at $ d(A)$.
    \begin{boxed}{white}
    \begin{example}\label{ex:gateaux-der-fails}
    \sloppy
        Let us define
        \begin{align*}
            \Omega \coloneqq \{x \in \mathds{R}^2_{++}: (x_1 + x_2)^2 - 7(x_1 + x_2)+ x_1x_2 + 10 = 0 \}.
        \end{align*}
        Consider the symplectic spectral function corresponding to the indicator function $ f : \mathds{R}_{++}^2 \to \mathds{R} $ of $\Omega \setminus
            \{(1,2)^\top, (2,1)^\top \}$.
        Let  $ A \coloneqq D_{(2,1)^\top} $. 
        It is easy to verify that $f$ is G\^ateaux differentiable at $ d(A) = (1,2)^\top$ with the gradient given by the zero vector.
        This is due to the fact that for any $y \in \mathds{R}^2$ and $t > 0$ small, we have $ d(A) + t y \notin \Omega \setminus
            \{(1,2)^\top, (2,1)^\top \}$.
        However, the symplectic spectral function $f \circ d$ is not G\^ateaux differentiable at $A$.
        Indeed, let $H=\begin{bsmallmatrix}
                    1 & 1 \\
                    1 & 0
                \end{bsmallmatrix}
        \oplus
                \begin{bsmallmatrix}
                    1 & 1 \\
                    1 & 0
                 \end{bsmallmatrix}$.
        We have $ d_1(A + t H)$ and $d_2 (A + tH) $ given by the eigenvalues of the block $\begin{bsmallmatrix}
                    2+t & t \\
                    t & 1
                \end{bsmallmatrix}$, 
        which satisfy
         $d_1(A + t H) + d_2 (A + tH) = 3 + t $ and $ d_1(A + tH) d_2(A+tH) = 2 + t - t^2 $.
        We thus have $d(A + t H ) \in \Omega \setminus
            \{(1,2)^\top, (2,1)^\top \}$, which implies that the following limit
        \begin{equation*}
             \lim_{t \searrow 0} \frac{f\left( d(A+tH)\right) - f \left( d(A) \right)}{t} = \lim_{t \searrow 0} \frac{1}{t}
        \end{equation*}   
        does not exist. 
    \end{example}
    \end{boxed}

\section{Applications}\label{sec:applications}
    We discuss several interesting applications of our findings related to the differentiability of symplectic spectral functions.
    To begin with, we provide a brief background of Gaussian quantum states and its mathematical formalism.

    Bosonic Gaussian states in quantum information theory play a privileged role owing to their accessibility in laboratories and the elegant mathematical formalism.
    Although a Gaussian state is a density operator on an infinite dimensional phase space, only a finite dimensional analysis suffices to study its physical properties.
    It is because a bosonic Gaussian state is completely determined by its first and second statistical moments.
    For an $n$-mode Gaussian state $\varrho$, its first moment or \emph{mean} is a vector $x_\varrho \in \mathds{R}^{2n}$ and its second moment or \emph{covariance matrix} $V_\varrho \in \mathds{P}(2n)$ satisfies $d_j(V_\varrho) \geq 1$ for all $1 \leq j \leq n$.
    Throughout the section, we shall assume that the quantum states are in an $n$-mode bosonic system.
    
    A necessary and sufficient condition for $A \in \mathds{P}(2n)$ to be a bona fide covariance matrix of a Gaussian state is that $d_j(A) \geq 1$ for all $1 \leq j \leq n$.
    These inequalities precisely capture Heisenberg uncertainty principle for Gaussian states \cite{serafini2023quantum}.
    We call a Gaussian state \emph{faithful} if all the symplectic eigenvalues of its covariance matrix are strictly greater than one.
    Let $\mathds{G}(2n)$ denote the set of bona fide covariance matrices of $n$-mode Gaussian states.
    It is easy to see that $\mathds{G}(2n)$ is a closed subset of $\mathds{P}(2n)$.
    Let $\mathds{G}^\circ(2n)$ denote the interior of $\mathds{G}(2n)$, which consists of the covariance matrices of $n$-mode faithful Gaussian states.

    We shall refer to \cite{serafini2023quantum} for the mathematical formalism of Gaussian states.
    Another relevant reference is \cite{weedbrook_etal}.
    
\subsection{Purity}
    Purity is a fundamental measure of \emph{mixedness} of a quantum state.
    This measure is especially relevant to bosonic Gaussian states because it directly quantifies noise, quantum correlations, and reliability of quantum communication protocols, and it is a highly experimentally accessible measure.
    The purity of a bosonic Gaussian state $\varrho$ is defined as
    \begin{align*}
        \mathcal{P}(V_\varrho) \coloneqq \frac{1}{\Pi_{j=1}^n d_j(V_\varrho)},
    \end{align*}
    which is given by the symplectic spectral function $\mathcal{P}(A)= \frac{1}{\Pi_{j=1}^n d_j(A)}$ for all $A \in \mathds{P}(2n)$.
    We show that the purity of Gaussian states is a Fr\'echet differentiable function of its covariance matrix, and we derive the Fr\'echet derivative explicitly.
    
    Consider the symmetric function $f:\mathds{R}^n_{++} \to \mathds{R}$ given by $f(x) \coloneqq \frac{1}{\Pi_{j=1}^n x_j}$ for all $x \in \mathds{R}^n_{++}$ so that $\mathcal{P}=f \circ d$.
    The function $f$ is clearly Fr\'echet differentiable everywhere in $\mathds{R}^n_{++}$ and its gradient is given by
    \begin{align*}
        \nabla f(x) = - f(x) \cdot x^{-1},
    \end{align*}
    where $x^{-1} \coloneqq (x_1^{-1},\ldots, x_n^{-1})$. 
    Theorem~\ref{thm:theorem_gradient} implies that $\mathcal{P}$ is Fr\'echet differentiable on $\mathds{P}(2n)$ and its gradient at $A \in \mathds{P}(2n)$ is given by
    \begin{align*}
        \nabla \mathcal{P}(A) = -\frac{1}{2 \sqrt{\det(A)}} M D_{d(A)}^{-1} M^\top,
    \end{align*}
    for any choice $M \in \mathds{SP}^\uparrow(2n, A)$.
    Here we used the fact that $\sqrt{\det(A)}= \Pi_{j=1}^n d_j(A)$ (see, e.g., \cite{dms}).

\subsection{von Neumann entropy}
    The von Neumann entropy is a measure of the information content of a quantum state.
    For a bosonic Gaussian state $\varrho$, its von Neumann entropy is defined as
    \begin{align}\label{eq:von-neumann-entropy}
        \mathcal{E}(V_\varrho) \coloneqq \sum_{j=1}^n \left[\left(d_j(V_\varrho)+1 \right) \ln\left(d_j(V_\varrho)+1 \right)- \left(d_j(V_\varrho)-1 \right) \ln \left(d_j(V_\varrho)-1 \right) \right].
    \end{align}
    We show that the above entropy function is Fr\'echet differentiable on $\mathds{G}^\circ(2n)$, and explicitly derive its Fr\'echet derivative.

    Consider the entropy function $\mathcal{E}$ defined on $\mathds{G}^\circ(2n)$ by \eqref{eq:von-neumann-entropy} and defined on $\mathds{P}(2n)\backslash \mathds{G}^\circ(2n)$ to be the zero function.
    Consider the symmetric function $g: \mathds{R}^n_{++} \to \mathds{R}$ defined for $ x \in \mathds{R}^n_{++}$ 
    by
    \begin{align*}
        g(x) \coloneqq 
            \begin{cases}
                \sum_{j=1}^n \left[\left(x_j+1 \right) \ln\left(x_j+1 \right) - \left(x_j-1 \right) \ln\left(x_j-1 \right) \right], & x_j > 1 \quad \forall 1 \leq j \leq n, \\
                0, & \text{otherwise}.
            \end{cases}
    \end{align*}
    Therefore, $\mathcal{E} = g \circ d$ is a symplectic spectral function.
    It is easy to see that $g$ is Fr\'echet differentiable at $x \in \mathds{R}^n_{++}$ provided $x_j > 1$ for all $1 \leq j \leq n$.
    In this case, we have
    \begin{align}\label{eq:nabla-g(x)}
        \nabla g(x) = \left( \ln\left( \frac{x_1+1}{x_1-1} \right), \ldots, \ln\left( \frac{x_n+1}{x_n-1} \right) \right)^\top.
    \end{align}
    Again, by Theorem~\ref{thm:theorem_gradient} we have that $\mathcal{E}$ is Fr\'echet differentiable on $\mathds{G}^\circ(2n)$ and its gradient at $A \in \mathds{G}^\circ(2n)$ is given by
    \begin{align}\label{eq:grad-entropy}
        \nabla \mathcal{E}(A) = \frac{1}{2} M D_{\nabla g(d(A))} M^\top,
    \end{align}
    for any choice $M \in \mathds{SP}^\uparrow(2n, A)$, where $\nabla g(d(A))$ is given by \eqref{eq:nabla-g(x)}.
    We observe that the gradient \eqref{eq:grad-entropy} of the von Neumann entropy is a positive definite matrix.
    An immediate implication of this, combined with Lebourg mean value theorem \cite[Th.~2.3.7]{Clarke}, is the well-known property of the von Neumann entropy that it is a matrix monotone function; i.e., if $A, B \in \mathds{G}(2n)$ such that $A \leq B$ in the sense of L\"owner ordering then we have $\mathcal{E}(A) \leq \mathcal{E}(B)$.

    Given a real analytic curve $(0,1) \ni t \mapsto \mathscr{A}(t) \in \mathds{G}(2n)$, the von Neumann entropy function $t \mapsto \mathcal{E}(A(t))$ is increasing (respectively, decreasing) if $\mathscr{A}'(t)$ is positive (respectively, negative) semidefinite for all $t \in (0,1)$.
    This was stated in Theorem~5.8 of \cite{mishraderivatives}, proof of which uses the analytic theory of symplectic eigenvalues.
    As an application of our differentiability analysis of symplectic spectral functions, we show that the analyticity assumption of the curve can be relaxed to mere Fr\'echet differentiability, provided that $\mathscr{A}(t) \in \mathds{G}^\circ(2n)$ for all $t \in (0,1)$.
    \begin{boxed}{white}
        \begin{theorem}
            Let $\mathscr{A}: (0,1) \to \mathds{G}^\circ(2n)$ be a Fr\'echet differentiable curve.
            The von Neumann entropy $\mathcal{E}(\mathscr{A}(t))$ is an increasing (respectively, decreasing) function of $t$ if $\mathscr{A}'(t)$ is positive (respectively, negative) semidefinite for all $t \in (0,1)$.
        \end{theorem}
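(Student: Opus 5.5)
The plan is to apply the one-variable chain rule to $h(t) \coloneqq \mathcal{E}(\mathscr{A}(t))$ and then use the sign of the derivative. Since $\mathscr{A}(t) \in \mathds{G}^\circ(2n)$, every symplectic eigenvalue $d_j(\mathscr{A}(t))$ exceeds $1$. The symmetric function $g$ is therefore Fr\'echet differentiable at $d(\mathscr{A}(t))$; indeed $g$ is smooth on the open set $\{x : x_j>1\ \forall j\}$, which contains a neighbourhood of $d(\mathscr{A}(t))$. By Theorem~\ref{thm:theorem_gradient}, $\mathcal{E} = g\circ d$ is Fr\'echet differentiable at $\mathscr{A}(t)$, with gradient \eqref{eq:grad-entropy}. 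The curve $\mathscr{A}$ is Fr\'echet differentiable at $t$ by hypothesis, so the chain rule recalled in the preliminaries shows that $h$ is differentiable on $(0,1)$ and
\begin{align*}
h'(t) = \left\langle \nabla \mathcal{E}(\mathscr{A}(t)), \mathscr{A}'(t) \right\rangle = \frac{1}{2}\Tr\left( M_t D_{\nabla g(d(\mathscr{A}(t)))} M_t^\top \mathscr{A}'(t) \right),
\end{align*}
where $M_t \in \mathds{SP}^\uparrow(2n, \mathscr{A}(t))$ is arbitrary.

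The key step is the sign of $h'(t)$. For $x_j>1$ we have $\ln\frac{x_j+1}{x_j-1} > 0$, so $D_{\nabla g(d(\mathscr{A}(t)))}$ is a positive diagonal matrix. Since $M_t$ is invertible, $G_t \coloneqq \frac12 M_t D_{\nabla g} M_t^\top$ is positive definite. If $\mathscr{A}'(t)$ is positive semidefinite, then $\Tr(G_t \mathscr{A}'(t)) = \Tr\big(G_t^{1/2}\mathscr{A}'(t) G_t^{1/2}\big) \ge 0$, because the trace of a positive semidefinite matrix is nonnegative. Hence $h'(t)\ge 0$ for all $t\in(0,1)$. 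In the negative semidefinite case the same computation gives $h'(t) \le 0$.

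Finally, apply the classical mean value theorem to the real function $h$, which is differentiable on $(0,1)$. For $s<t$ we get $h(t)-h(s) = h'(\tau)(t-s)$ for some $\tau\in(s,t)$, so $h$ is nondecreasing when $h'\ge0$ and nonincreasing when $h'\le 0$; "increasing" is read in this non-strict sense. The main point needing care is that the chain rule is legitimate, i.e.\ that $\mathcal{E}$, defined piecewise with zero outside $\mathds{G}^\circ(2n)$, is genuinely Fr\'echet differentiable at interior points. This holds because $\mathds{G}^\circ(2n)$ is open and $g$ agrees with the smooth entropy expression near $d(\mathscr{A}(t))$. Everything else is routine.
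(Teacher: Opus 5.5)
Your proposal is correct and follows essentially the same route as the paper: apply the chain rule with the gradient formula \eqref{eq:grad-entropy} from Theorem~\ref{thm:theorem_gradient}, then note that the positive definite gradient paired with a positive (negative) semidefinite $\mathscr{A}'(t)$ gives a trace of definite sign. You also make explicit two steps the paper leaves implicit: that $g$ agrees with the smooth entropy expression near $d(\mathscr{A}(t))$, which justifies Fr\'echet differentiability of $\mathcal{E}$ there, and that the mean value theorem converts the sign of $h'$ into (non-strict) monotonicity.
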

    \end{boxed}
    \begin{proof}
        Consider the map $\phi(t)\coloneqq \mathcal{E}(\mathscr{A}(t))$ for all $t \in (0,1)$.
        We know that the function $x \mapsto x \ln x$ is  Fr\'echet differentiable on $(0, \infty)$.
        Therefore, $\phi$ is also  Fr\'echet differentiable on $(0, 1)$.
        Using the chain-rule and entropy gradient expression \eqref{eq:grad-entropy}, we have for any choice $M \in \mathds{SP}^\uparrow(2n, A)$ that
        \begin{align}
            \phi'(t) 
                &= D\mathcal{E}(\mathscr{A}(t))(\mathscr{A}'(t)) \nonumber \\
                &= \frac{1}{2} \Tr\left( M D_{\nabla g(d(A))} M^\top \mathscr{A}'(t) \right),\label{eq:phi-prime}
        \end{align}
        where $g(d(A))$ is given by \eqref{eq:nabla-g(x)}.
        The expression \eqref{eq:phi-prime} directly gives $\phi'(t) \geq 0$ (respectively, $\phi'(t) \leq 0$) if $\mathscr{A}'(t)$ is positive (respectively, negative) semidefinite for all $t \in (0,1)$.
        This completes the proof.
    \end{proof}

\subsection{R\'enyi entropy}
    \sloppy
    For $p > 1$, the R\'enyi entropy of a quantum state $\varrho$ is defined as $\log_2 \left(\|\varrho\|_p^p \right)/(1-p)$, where $\|\varrho\|_p \coloneqq \Tr\left(\varrho^p \right)^{\frac{1}{p}}$ is the \emph{Schatten $p$-norm} of $\varrho$.
    The R\'enyi entropy of a Gaussian state is a symplectic spectral function of its covariance matrix, explicitly given by
    \begin{align*}
        \mathcal{E}_p(V_\varrho) \coloneqq \sum_{j=1}^n \frac{1}{(1-p)} \log_2 \left( \frac{2^p}{\left( d_j(V_\varrho)+1 \right)^p - \left( d_j(V_\varrho)-1 \right)^p}\right).
    \end{align*}
    Let us extend $\mathcal{E}_p$ to the entire $\mathds{P}(2n)$ by defining it to be the zero function outside $\mathds{G}(2n)$.
    The corresponding symmetric function $h: \mathds{R}^n_{++} \to \mathds{R}$ that satisfies $\mathcal{E}_p = h \circ d$ is given by
    \begin{align*}
        h(x) \coloneqq
            \begin{cases}
                \sum_{j=1}^n \frac{1}{(1-p)} \log_2 \left( \frac{2^p}{\left( x_j+1 \right)^p - \left( x_j-1 \right)^p}\right), & x_j \geq 1 \quad \forall 1 \leq j \leq n, \\
                0, & \text{otherwise}.
            \end{cases}
    \end{align*}
    Clearly, $h$ is Fr\'echet differentiable at $x \in \mathds{R}^n_{++}$ for which $x_j > 1$ for all $1 \leq j \leq n$, and we have 
    \begin{align}\label{eq:nabla-h(x)}
        \nabla h(x) = \frac{p}{(p-1) \ln 2} \cdot \left(\frac{\left(x_1+1\right)^{p-1}-\left(x_1-1\right)^{p-1}}{\left(x_1+1\right)^{p}-\left(x_1-1\right)^{p}}, \ldots,  \frac{\left(x_n+1\right)^{p-1}-\left(x_n-1\right)^{p-1}}{\left(x_n+1\right)^{p}-\left(x_n-1\right)^{p}}\right)^\top.
    \end{align}
    We thus have by Theorem~\ref{thm:theorem_gradient} that $\mathcal{E}_p$ is Fr\'echet differentiable on $\mathds{G}^\circ(2n)$ and its gradient at $A \in \mathds{G}^\circ(2n)$ is given by
    \begin{align*}
        \nabla \mathcal{E}_p(A) = \frac{1}{2} M D_{\nabla h(d(A))} M^\top,
    \end{align*}
    for any choice $M \in \mathds{SP}^\uparrow(2n, A)$, where $\nabla h(d(A))$ is given by \eqref{eq:nabla-h(x)}.

\section*{Summary}
    We introduce the notion of symplectic spectral functions on the set of $2n \times 2n$ real positive definite matrices, defined by $f \circ d$ for some real-valued symmetric function $f$ on $\mathds{R}^n_{++}$ and the symplectic eigenvalue vector map given by $d(A)=(d_1(A),\ldots, d_n(A))$ for $A \in \mathds{P}(2n)$.
    We establish precise conditions for Fr\'echet differentiability, strict Fr\'echet differentiability, and continuous G\^ateaux differentiability of the symplectic spectral function $f \circ d$ in terms of $f$, and we also derive explicit forms of these derivatives of symplectic spectral functions.
    These results reveal a non-intuitive qualitative properties of symplectic spectral functions that although the map $d$ is not even G\^ateaux differentiable in general, the aforementioned differentiability properties of $f$ transfer to $f \circ d$ and vice-versa.
    This is consistent with similar properties of classical spectral functions.
    We compute the Clarke generalized gradient of locally Lipschitz symplectic spectral function $f \circ d$ in terms of that of $f$.
    As applications of our findings, we establish Fr\'echet differentiability of purity, von Neumann entropy, and R\'enyi entropy of bosonic faithful Gaussian states as functions of their covariance matrices, and also compute the derivative expressions explicitly.
    
\section*{Declaration of competing interest}
The authors declare that there is no competing interest.

\section*{Data availability}
No data was used for the research described in this article.

\section*{Declaration of generative AI and AI-assisted technologies in the writing process}
No AI or AI-assisted technologies were used in the writing process.

\section*{Acknowledgements}
    Hemant K. Mishra acknowledges support from FRS Project No.~MISC~0147.
    Temjensangba thanks Nagaland University for granting study leave with pay.

\end{document}